\newtheorem{theorem}{Theorem}[section]
\newtheorem{proposition}[theorem]{Proposition}
\newtheorem{corollary}[theorem]{Corollary}
\theoremstyle{definition}
\newtheorem{definition}[theorem]{Definition}
\newtheorem{example}[theorem]{Example}
\newtheorem{remark}[theorem]{Remark}
\newcommand{\excise}[1]{}
\renewcommand{\dim}{\operatorname{dim}}
\newcommand{\crk}{\operatorname{crk}}
\renewcommand{\and}{\qquad\text{and}\qquad}
\newcommand{\Sn}{\mathfrak{S}}
\newcommand{\Q}{\mathbb{Q}}
\newcommand{\C}{\mathbb{C}}
\newcommand{\IH}{I\! H}
\newcommand{\OS}{\mathcal{O}}
\newcommand{\FS}{\operatorname{FS}}
\newcommand{\FI}{\operatorname{FI}}
\newcommand{\FSop}{\operatorname{FS^{op}}}
\begin{document}
\spacing{1.2}
\noindent{\Large\bf Representation stability in the intrinsic hyperplane arrangements associated to irreducible representations of the symmetric-groups.}\\

\noindent{\bf Ian Flynn, \bf Eric Ramos\footnote{Supported by NSF grant DMS-2137628.}, and \bf Benjamin Young}\\
Department of Mathematical Sciences, Stevens Institute of Technology, Hoboken ,NJ, 07030\\
Department of Mathematics, University of Oregon, Eugene, OR, 97401\\

{\small
\begin{quote}
\noindent {\em Abstract.}
Some of the most classically relevant Hyperplane arrangements are the Braid Arrangements $B_n$ and their associated compliment spaces $\mathcal{F}_n$. In their recent work, Tsilevich, Vershik, and Yuzvinsky \cite{TVY} construct what they refer to as the intrinsic hyperplane arrangement within any irreducible representation of the symmetric group that generalize the classical braid arrangements. Through examples it is also shown that the associated compliment spaces to these intrinsic arrangements display behaviors far removed from $\mathcal{F}_n$. In this work we study the intrinsic hyperplane arrangements of irreducible representations of the symmetric group from the perspective of representation stability. This work is both theoretical, proving representation stability theorems for hyperplane complements, as well as statistical, examining the outputs of a number of simulations designed to enumerate flats.
\end{quote} }

\section{Introduction}

For any $n \geq 1$, the \textbf{braid arrangement} $B_n$ is the hyperplane arrangement within $\mathbb{C}^{n-1} = \mathbb{C}^n/\langle(1,\ldots,1)\rangle$ generated by the hyperplanes $x_i = x_j$, up to simultaneous translation. This particular complex hyperplane arrangement is extremely classical and has been studied from a large number of perspectives.

One particular perspective, that we largely follow in this work, is as follows. Observe that the ambient space $\mathbb{C}^{n-1} = \mathbb{C}^n/\langle(1,\ldots,1)\rangle$ is a representation of the symmetric group $\Sn_n$. In fact, this is the standard representation of $\Sn_n$. The hyperplane $x_i = x_j$ can then be viewed as the fixed locus of this representation with respect to the transposition $(ij)$.

In the work \cite{TVY}, this idea of viewing the braid arrangement in terms of fixed loci of transpositions was extended beyond the standard representation. In particular, for any irreducible representation of the Symmetric group a hyperplane arrangement was constructed that is known as the \textbf{intrinsic hyperplane arrangement} of the irreducible. One may equivalently view these arrangements as the hyperplane arrangements that naturally associate to the corresponding Specht Matroid \cite{WWZ}.

In this paper we will study the intrinsic hyperplane arrangements from the perspective of representation stability in two different styles. Namely, we will consider the cohomology groups of the complementary spaces, as well as their Kazhdan-Lusztig Coefficients \cite{EPW}. While both of these items are known to enjoy a number of stable behaviors \cites{CEF,fs-braid} in the case of the Braid arrangement, we will find that things are not nearly as clear cut in the general circumstance (See Section \ref{notStable}).

\subsection{Representation Stability in the Cohomology groups of the Complementary Spaces}

Given any complex hyperplane arrangement, one of its most significant topological features is its complementary space. In the case of the aforementioned braid arrangement, for instance, the complementary space is the collection of $n$-tuples of non-colliding points in $\C^n$, up to simultaneous translation. This space is more commonly referred to as the \textbf{$n$-pointed configuration space} of $\C^n/\langle(1,\ldots,1)\rangle$.

In his seminal work, Church \cite{Chu} showed that configuration spaces of Manifolds displayed a number of stability phenomena that manifested in their Cohomology groups. This work was then expanded by Church, Ellenberg, and Farb using the language of \textbf{FI-modules} \cite{CEF}.

We write $\FI$ for the category whose objects are the finite sets of the form $[n] := \{1,\ldots,n\}$, and whose morphisms are injective functions of sets. An $\FI$-module is a (covariant) functor from $\FI$ to the category $\mathbb{C}$-vector-spaces. Put more concretely, an $\FI$-module is a collection of complex vector spaces $V := \{V_n\}_{n \in \mathbb{N}}$ such that any injection of sets $[n] \hookrightarrow [m]$ induces a linear map $V_n \rightarrow V_m$, that respects composition in the usual sense. We call these induced maps the \textbf{transition maps} of the module. Observe that injections from a finite set to itself are permutations, and so the vector space $V_n$ is an $\Sn_n$ representation for each $n$. Therefore, an $\FI$-module may be viewed as a collection of $\Sn_n$ representations, with $n$ varying, that are compatible with one another according to the transition maps.

All of the usual constructions from the algebra of modules and rings will have an analog for $\FI$-modules. For instance, a \textbf{submodule} of an $\FI$-module $V$ is an $\FI$-module $W$ such that $W_n \subseteq V_n$ for every $n$, while the transition maps make all necessary diagrams commute. We say that an $\FI$-module $V$ is \textbf{finitely generated}, if $V_n$ is finite dimensional for all $n$ and if there exists an $N \geq 0$ such that for any $n \geq N$ the vector space $V_n$ is spanned by the images of $V_N$ under all transition maps $V_N \rightarrow V_n$. One can think of this condition as asserting that the (necessarily finite) basis elements of $V_N$ span all of the vector spaces $V_n$ for $n \geq N$. The condition of finite generation has a number of very powerful consequences (see Theorem \ref{fgconsequence}).

To state our first stability theorem, we must first introduce some notation. Throughout this work, we will write $\lambda = (\lambda_1,\ldots,\lambda_h)$ to denote a \textbf{partition} of some integer fixed integer $m$. Given a partition $\lambda$ and an integer $n \geq m + \lambda_1$, we write $\lambda[n]$ to denote the \textbf{padded partition} $\lambda[n] = (n-m,\lambda_1,\ldots,\lambda_h)$. In other words, $\lambda[n]$ is obtained from $\lambda$ by adding a large enough first row to create a partition of $n$. It is a fact that the irreducible representations of $\Sn_n$ are in natural bijection with partitions of $n$ (See Remark \ref{Tabloid}), and so we may consider the intrinsic arrangements of the partitions $\lambda[n]$, $\mathcal{A}_n^\lambda$  with $n$ varying. We will be writing $S^\lambda_n$ for the irreducible representation of $\Sn_n$ associated to $\lambda[n]$.

\begin{theorem}\label{CompStab}
    Let $\lambda$ be a fixed partition of some non-negative integer $m$, and let $i \geq 0$ be fixed. Then, writing $\mathcal{M}(\mathcal{A}_n^\lambda)$ for the complementary space of $\mathcal{A}_n^\lambda$, the assignment
    \[
    n \mapsto H^i(\mathcal{M}(\mathcal{A}_n^\lambda))
    \]
    can be extended to a finitely generated $\FI$-module.
\end{theorem}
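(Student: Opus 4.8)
The strategy is to realize the assignment $n \mapsto H^i(\mathcal{M}(\mathcal{A}_n^\lambda))$ as an $\FI$-module by constructing the underlying topological spaces functorially in $n$, and then to invoke the Orlik--Solomon presentation together with a finite generation criterion. First I would set up the ambient $\FI$-module: the padded partitions $\lambda[n]$ organize the Specht modules $S^\lambda_n$ into an $\FI$-module $\underline{S^\lambda}$ (this is the classical fact that, for fixed $\lambda$, the collection $\{S^{\lambda[n]}\}_n$ with transition maps coming from the branching rule is a finitely generated $\FI$-module, generated in degree $\le m + 2\lambda_1$ or thereabouts). The intrinsic arrangement $\mathcal{A}_n^\lambda$ lives inside $S^\lambda_n$, and its hyperplanes are the fixed loci $(S^\lambda_n)^{(jk)}$ of transpositions $(jk)$ with $j,k \le n$. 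An injection $[n] \hookrightarrow [n']$ induces $S^\lambda_n \to S^\lambda_{n'}$ carrying each such fixed hyperplane into a corresponding one, hence a map of arrangement complements $\mathcal{M}(\mathcal{A}_n^\lambda) \to \mathcal{M}(\mathcal{A}_{n'}^\lambda)$ — the one subtlety to check here is that this map is well-defined on complements, i.e.\ that the image of a point avoiding all hyperplanes still avoids all hyperplanes; this follows because the transition map is $\Sn_n$-equivariant and injective, so a vector not fixed by any transposition in $\Sn_n$ maps to a vector not fixed by any of the transpositions in the image subgroup.

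Applying $H^i(-;\C)$ contravariantly would give an $\FI^{\op}$-module, so instead I would use the \emph{Orlik--Solomon algebra}: for a complex hyperplane arrangement, $H^*(\mathcal{M})$ is isomorphic to the Orlik--Solomon algebra, which is computed combinatorially from the intersection lattice (the poset of flats) of the arrangement. The assignment $n \mapsto L(\mathcal{A}_n^\lambda)$, the intersection lattice, is covariantly functorial in $\FI$ in an appropriate sense once one tracks which transpositions' fixed loci are involved; concretely, a flat is an intersection of some of the hyperplanes $H_{jk} = (S^\lambda_n)^{(jk)}$, and the $\FI$-morphism pushes the index set $\{j,k\}$ forward. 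From this one obtains that $n \mapsto H^i(\mathcal{M}(\mathcal{A}_n^\lambda))$, built as the degree-$i$ piece of the Orlik--Solomon algebra, is a covariant $\FI$-module: the Orlik--Solomon generators $e_{jk}$ in degree $1$ transform like the edges $\{j,k\}$ of the complete graph $K_n$, and the relations are the combinatorial (circuit/broken-circuit) relations, which are preserved under the pushforward.

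To establish \emph{finite generation}, I would use the standard mechanism for FI-modules: $H^i(\mathcal{M}(\mathcal{A}_n^\lambda))$ is a subquotient of $\Lambda^i$ of the degree-one part, and the degree-one part is spanned by the symbols $e_{jk}$ indexed by two-element subsets $\{j,k\} \subseteq [n]$ — equivalently, it is a quotient of the $\FI$-module $n \mapsto \C\{\text{2-subsets of }[n]\}$, which is finitely generated (generated in degree $2$). Hence the degree-$i$ Orlik--Solomon piece is a subquotient of $\Lambda^i$ of a finitely generated $\FI$-module, and by the Noetherianity theorem for $\FI$-modules over a field of characteristic zero (Theorem~\ref{fgconsequence}, or the Church--Ellenberg--Farb Noetherian property), any subquotient of a finitely generated $\FI$-module is finitely generated. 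This pins down the result. The one technical point requiring care — and the main obstacle — is verifying that the Orlik--Solomon \emph{relations} assemble into a genuine $\FI$-submodule of the free one on the $e_{jk}$: the broken-circuit relations depend on the matroid structure of $\mathcal{A}_n^\lambda$, and one must check that an $\FI$-morphism $[n]\hookrightarrow[n']$ sends a dependent set of hyperplanes in $\mathcal{A}_n^\lambda$ to a dependent set in $\mathcal{A}_{n'}^\lambda$ (and respects the resulting linear relations). This is where the specific geometry of the intrinsic arrangement, rather than just abstract nonsense, enters; I expect it to follow from the fact that the transition map $S^\lambda_n \hookrightarrow S^\lambda_{n'}$ is a linear injection identifying $\mathcal{A}_n^\lambda$ with the restriction of (a sub-collection of) $\mathcal{A}_{n'}^\lambda$ to a coordinate subspace, so ranks of flats are preserved, but making this precise is the crux of the argument.
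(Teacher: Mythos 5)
There is a genuine gap, and it starts with the object itself: for general $\lambda$ the hyperplanes of $\mathcal{A}_n^\lambda$ are \emph{not} the fixed loci $(S^\lambda_n)^{(jk)}$ of single transpositions, and they are not indexed by $2$-element subsets of $[n]$. They are the subspaces $H_\alpha = \sum_{\tau\in T_\alpha}(S^\lambda_n)^\tau$ indexed by set partitions $\alpha$ of shape $\lambda[n]^\dagger$ (for hooks, by $(l+1)$-subsets); a single transposition's fixed locus has codimension $>1$ once $\lambda\neq(1)$. So your degree-one Orlik--Solomon piece is not a quotient of $n\mapsto\C\{\text{2-subsets}\}$ generated in degree $2$; the correct statement (used in the paper's Theorem \ref{compfg}) is that it is generated in degree $|\lambda|+\lambda_1$. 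More seriously, your claimed map of complements $\mathcal{M}(\mathcal{A}_n^\lambda)\to\mathcal{M}(\mathcal{A}_{n'}^\lambda)$ induced by the injective transition map $S^\lambda_n\hookrightarrow S^\lambda_{n'}$ does not exist: already for the braid case the image of a configuration acquires collisions among the hyperplanes involving the new indices, and your equivariance argument only rules out hyperplanes supported on the image of $[n]$. Having (correctly) noticed that this direction would in any case give an $\FI^{\op}$-structure, you retreat to defining the $\FI$-action combinatorially by pushing forward generator labels, but then the well-definedness of this pushforward on the Orlik--Solomon relations --- which you yourself flag as ``the crux'' --- is exactly what is never proved, and your proposed justification (that the injection identifies $\mathcal{A}_n^\lambda$ with a rank-preserving restriction of part of $\mathcal{A}_{n'}^\lambda$) is unsubstantiated.

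The paper resolves precisely this point by going through the \emph{dual} maps: by self-duality of Specht modules (Theorem \ref{selfdual}), each injection $f:[n]\hookrightarrow[n']$ yields a surjection $f^\ast:S^\lambda_{n'}\twoheadrightarrow S^\lambda_n$ satisfying $(f^\ast)^{-1}(H_\alpha)=H_{f(\alpha)}$, which genuinely restricts to a map of complements $\mathcal{M}(\mathcal{A}_{n'}^\lambda)\to\mathcal{M}(\mathcal{A}_n^\lambda)$; contravariant cohomology then gives the covariant $\FI$-structure, and in particular the induced maps automatically send $e_\alpha\mapsto e_{f(\alpha)}$ on Orlik--Solomon generators, so no separate compatibility with circuits needs to be checked. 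Finite generation is then obtained not from Noetherianity applied to exterior powers but from Gadish's Theorem A for $\FI$-arrangements (Theorem \ref{GadThm}), after verifying in Proposition \ref{fgnorm} that this $\FI$-arrangement is finitely generated and normal; the paper's remark after Theorem \ref{compfg} explicitly notes that an Orlik--Solomon-only argument of the kind you sketch could in principle be made to work, but that carrying it out amounts to exactly the compatibility you leave open, and that the arrangement-level proof buys more (e.g.\ \'etale cohomology consequences). To repair your write-up, you would need to (i) use the correct indexing of hyperplanes, and (ii) either construct the dual surjections as above or directly prove that label pushforward preserves the dependency structure of the normals; note also that the Noetherian property you invoke is not Theorem \ref{fgconsequence} of the paper, which lists consequences of finite generation rather than closure under subquotients.
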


\begin{remark}
    While we have focused mainly on $\FI$-modules over the complex numbers during this introduction, we will see in the next section that all of these objects can be defined over any Noetherian ring. The above theorem remains true if you take cohomology with integral coefficients.
\end{remark}

Note that the above theorem has two statements within it. Firstly, for any injection of sets $[n] \hookrightarrow [r]$ there is a natural induced map
\[
H^i(\mathcal{M}(\mathcal{A}_n^\lambda)) \rightarrow H^i(\mathcal{M}(\mathcal{A}_r^\lambda)).
\]
Secondly, the $\FI$-module that one forms by taking the vector spaces $H^i(\mathcal{M}(\mathcal{A}_n^\lambda))$ along with the aforementioned induced maps is finitely generated.

If $\lambda = (1)$ then the irreducible representation associated to $\lambda[n]$ is precisely the standard representation. Therefore the above theorem specializes to say that the cohomologies of configuration space of the plane are finitely generated as an $\FI$-module. This has been known since the originating work on $\FI$-modules \cite{CEF}.

We prove Theorem \ref{CompStab} using Gadish's theory of $\FI$-arrangements. This, among other things, tells us that finite generation exists at the level the topological spaces themselves. One consequence of this is that we obtain finite generation results for \'etale cohomology groups \cite{Gad}.

As a consequence of Theorem \ref{CompStab} and Theorem \ref{fgconsequence}, we may say the following:

\begin{corollary}
Let $\lambda$ be a fixed partition of some non-negative integer $m$, and let $i \geq 0$ be fixed. Then,
\begin{enumerate}
    \item There is a finite set $\Lambda \subseteq \bigcup_n \mathbb{Y}_n$, as well as non-negative integers $\{m_\nu\}_{\nu \in \Lambda}$, such that for all $n \gg 0$,
    \[
    H^i(\mathcal{M}(\mathcal{A}_n^\lambda)) \cong \bigoplus_{\nu \in \Lambda} m_\nu S^\nu_n;
    \]
    \item There exists a polynomial $p_V \in \Q[x_1,x_2,\ldots]$, such that for all $n \gg 0$, the character $\chi_n$ of $H^i(\mathcal{M}(\mathcal{A}_n^\lambda))$ satisfies,
    \[
    \chi_n(\sigma) = p_V(X_1(\sigma),X_2(\sigma),\ldots),
    \]
    where $X_i(\sigma)$ is the number of $i$-cycles in the decomposition of $\sigma$. In particular, there is a polynomial $p \in \Q[n]$ such that for all $n \gg 0$, $\dim_\Q H^i(\mathcal{M}(\mathcal{A}_n^\lambda)) = p_V(n)$.
\end{enumerate}
\end{corollary}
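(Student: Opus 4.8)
The plan is to derive both items formally from the structure theory of finitely generated $\FI$-modules, applied to the object produced by Theorem \ref{CompStab}. Fix $\lambda$ and the degree $i$, and work over $\C$ (equivalently $\Q$). By Theorem \ref{CompStab} the assignment $V : n \mapsto H^i(\mathcal{M}(\mathcal{A}_n^\lambda))$, together with the maps induced in cohomology by injections of sets, is a finitely generated $\FI$-module; the entire argument then consists of invoking Theorem \ref{fgconsequence} for this particular $V$ and unwinding what it says.

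For part (1), I would use the fact that a finitely generated $\FI$-module over a field of characteristic zero is representation stable: there is an integer $N$ so that for every $n \geq N$ one has $V_n \cong \bigoplus_\nu m_\nu\, S^\nu_n$ with the indexing partitions $\nu$ and the multiplicities $m_\nu$ independent of $n$. Finite generation is also what makes the index set finite — if a generating set sits in degrees at most $d$, then every constituent $S^\nu_n$ that can occur has $|\nu| \leq d$ — so collecting the $\nu$ that actually appear yields the finite set $\Lambda \subseteq \bigcup_n \mathbb{Y}_n$ together with the constants $\{m_\nu\}_{\nu\in\Lambda}$ demanded by the statement.

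For part (2), the same theorem gives that the characters $\chi_n$ of $V$ are eventually computed by a single character polynomial $p_V \in \Q[X_1,X_2,\ldots]$, i.e.\ $\chi_n(\sigma) = p_V(X_1(\sigma),X_2(\sigma),\ldots)$ for all $n \gg 0$ and all $\sigma \in \Sn_n$, where $X_j(\sigma)$ is the number of $j$-cycles of $\sigma$. I would then specialize to $\sigma = \id_n$: since $X_1(\id_n) = n$ and $X_j(\id_n) = 0$ for $j \geq 2$, this yields $\dim_\Q H^i(\mathcal{M}(\mathcal{A}_n^\lambda)) = \chi_n(\id_n) = p_V(n,0,0,\ldots)$, which is a genuine polynomial in $n$ once $n$ is large, establishing the last clause.

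The hard part is not here — it is already packaged inside Theorem \ref{CompStab}, and this corollary is simply its formal harvest. The one point I would be careful about is the coefficient ring: the decomposition in (1) and the character polynomial in (2) genuinely use characteristic zero, so even though Theorem \ref{CompStab} is valid integrally, for the corollary one must pass to $\C$ (which costs nothing, since these $\Sn_n$-representations and their characters are defined over $\Q$). It is also worth recording explicitly that all of the "for $n \gg 0$" ranges are uniform, coming from the single finite-generation bound, so no separate stabilization estimate is needed.
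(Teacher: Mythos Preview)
Your proposal is correct and matches the paper's approach exactly: the paper states this corollary as an immediate consequence of Theorem \ref{CompStab} and Theorem \ref{fgconsequence}, with no further argument. Your additional care about the characteristic-zero hypothesis and the specialization $\sigma=\id_n$ is accurate and, if anything, more explicit than what the paper writes.
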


\begin{remark}
    It is important to point out that the polynomial $p_V$ is a genuine polynomial, using only finitely many of the variables $x_1,x_2,\ldots$. As a consequence of this, we find that the character of $H^i(\mathcal{M}(\mathcal{A}_n^\lambda))$ does not ``see" cycles above a certain length.
\end{remark}

\subsection{Kazhdan-Lusztig Coefficients}

Given any complex arrangement $\mathcal{A}$ (in fact any matroid), Elias, Proudfoot, and Wakefield define the \textbf{Kazhdan-Lusztig polynomial} of $\mathcal{A}$ \cite{EPW}. This polynomial can either be defined in an entirely combinatorial way using a recursive formula based on the lattice of flats of $\mathcal{A}$, or in a geometric way using the intersection cohomology of the \textbf{reciprocal plane} of the arrangement (see Definition \ref{KLdef}). These polynomials have seen an explosion of study since their inception \cites{fs-braid,thag,kl-survey}.

Relevant for us is the work \cite{fs-braid}. In that paper, Proudfoot and the third author considered the coefficients of the Kazhdan-Lusztig polynomials of the braid arrangements. It was shown that for each $i \geq 0$, the generating function for the $i$-th coefficient was equal to a rational function whose poles are computed. In other words, the $i$-th coefficient is asymptotically exponential in $n$.

In an attempt to generalize \cite{fs-braid}, in this paper we will consider the Kazhdan-Lusztig coefficients of the intrinsic arrangements of the irreducible representations associated to $\lambda[n]$, where $\lambda = (1,\ldots,1) = (1^l)$. Our primary theorem is the following.

\begin{theorem}\label{bestwecando}
Let $\lambda = (1^l)$ for some $l \geq 1$ and write $\mathcal{A}_n$ for the intrinsic arrangement of $\lambda[n]$. Then there exists a function $f_l(n):\mathbb{N} \rightarrow \mathbb{N}$ such that for all $n \geq 0$, $f_l(n)$ is a lower bound on the first Kazhdan-Lusztig coefficient of $\mathcal{A}_n$, and
\[
\lim_{n \rightarrow \infty}f_l(n)(l+1)!/(l+1)^n = 1.
\]
\end{theorem}

It was proven in \cite{EPW} that the first Kazhdan-Lusztig coefficient of the braid arrangement $\mathcal{A}_n^1$ is equal to $2^{n-1}-1-\binom{n}{2}$. The above theorem tells us that for $l > 1$ this coefficient will be bounded from below by something which is asymptotically like $(l+1)^n$. In fact, we prove something stronger, which is that all character behaviors of the \emph{equivariant} Kazhdan-Lusztig coefficients in this case are bounded from below by uniformly described exponentials (see Remark \ref{expansion}).

Of course, one hopes that Theorem \ref{bestwecando} can be improved to show this inequality is asymptotically sharp. While we cannot prove that this is not the case, we discuss in Section \ref{notStable} that it seems quite unlikely, even in the simplest case $l=2$. To briefly discuss why this is so, we note that \cite{fs-braid} prove their asymptotic statements using what are known as \textbf{$\FSop$-modules} (see Section \ref{fsmod}). While we show that this module structure still exists for $l > 1$, in Section \ref{notStable} we use computer algebra and statistical methods illustrate that the structure likely lacks the necessary finite generation properties. This is also why we are currently unable to prove much about the other coefficients of the Kazhdan-Lusztig polynomial in these cases. While the methods of this paper can likely be used to obtain exponential lower bounds on these numbers, it is currently out of reach to say anything else.

\section*{Acknowledgements}
ER Was supported by NSF grant DMS-2137628. He is grateful to Nick Proudfoot and Ben Leinwand for a number of helpful discussions.

\section{Background}

\subsection{The representation theory of the symmetric group and representation stability}.

In this section we briefly review the necessary background from the (complex) representation theory of the symmetric group that will be needed in this work. We then dive into the theory of representation stability and, in particular, the theory of FI-modules that will provide the technical hammer needed to prove the main results of the work. All of the theorem statements about the complex representation theory of the symmetric group can be found in any standard text on the subject, for instance \cite{CSST}.

Throughout this work we will write $\Sn_n$ to denote the symmetric group on $n$-letters. 

\begin{definition}
Let $n \geq 0$ be fixed. A \textbf{partition} of $n$ is a tuple $\lambda = (\lambda_1,\ldots,\lambda_r)$ of positive integers such that $\sum_i \lambda_i = n$, and $\lambda_i \geq \lambda_{i+1}$ for each $i$. By convention, when $n = 0$, we say that there is precisely one partition of $n$, namely the empty partition. We write $\mathbb{Y}_n$ to denote the set of all partitions of the integer $n$.

Relatedly, a \textbf{partition of $[n]$} is a collection of non-empty, disjoint subsets of the set $[n] = \{1,\ldots,n\}$ whose union is all of $n$. If $\lambda$ is some partition of $n$, we say that a partition of $[n]$, $\alpha$, has \textbf{shape $\lambda$} if the sizes of the subsets present in $\alpha$ can be organized to form $\lambda$. We write $\mathbb{P}_n$ to denote the set of all partitions of $[n]$, and $\mathbb{P}_n(\lambda)$ to denote the subset of partitions with shape $\lambda$.
\end{definition}

It is a well known fact that the irreducible complex representations of $\mathbb{S}_n$ are in bijection with partitions of $n$. We will generally write $S^\lambda$ to denote the irreducible representation associated to the partition $\lambda$.

\textbf{It is our convention that the irreducible corresponding to the partition $(n)$ is the trivial representation.}

\begin{remark}
It is common to visualize partitions of $n$ as tableau of boxes, where the $i$-th row of boxes has $\lambda_i$ boxes. This visualization is referred to as the \textbf{Young diagram} associated to $\lambda$. We will frequently jump between this visualization and the ``tuple-form" of a partition for certain definitions. For instance, in Definition \ref{intrinsic}, we define one partition to be the transpose of another.
\end{remark}

\begin{remark}\label{Tabloid}
While it is not absolutely critical to know precisely how the irreducible representation partition correspondence works, there is one particular presentation of these modules that we will continuous refer to, and so we take a moment to define it now. Fix a partition $\lambda$ of $n$. Consider the set $\mathcal{T}$ of all bijections $T$ from the Young diagram of $\lambda$ to $[n]$. That is to say, $T$ assigns in a bijective fashion numbers $\{1,2,\ldots,n\}$ to each box of the Young diagram. If $R(T) \subseteq \Sn_n$ is the subgroup of permutations which preserve the elements in the rows of $T$, then we write $\{T\}$ to denote the orbit of $T$ under the action of $R(T)$. The object $\{T\}$ is often referred to as a \textbf{tabloid} with shape $\lambda$.

The set of all tabloids of shape $\lambda$ form an $\Sn_n$-set in the obvious way, and we may write $M^\lambda$ to denote the $\Sn_n$-representation which linearizes this set. The irreducible representation $S^{\lambda}$ can be found as a subrepresentation of $M^\lambda$ as follows. For each bijection $T$ from the diagram of $\lambda$ to $[n]$, define
\[
v_T = \sum_{\sigma \in C(T)}\text{sgn}(\sigma)\{\sigma(T)\},
\]
where $C(T)$ is the subgroup of permutations that perserve the elements in the columns of $T$. The irreducible representation $S^\lambda$ is the span of the vectors $v_T$.
\end{remark}

\begin{example}
The irreducible representation $S^{(n-1,1)}$ corresponding to the partition $(n-1,1)$ is the standard representation. That is to say, the $n-1$ dimensional subrepresentation of $\C^n$ spanned by the differences of the canonical basis vectors. More generally, let $k \geq 0$ be fixed. Then the $\Sn_n$ representation associated to the hook partition $(n-k,k)$ is the exterior power $\bigwedge^k S^{(n-1,1)}$.
\end{example}

The following theorem will arise later for technical reasons.

\begin{theorem}\label{selfdual}
Let $n \geq 0$ be an integer, and let $\lambda$ be a partition of $n$. Then $\Sn_n$ acts on the vector space $S^\lambda$ by matrices with rational entries. In particular, the dual of any complex symmetric group representation is isomorphic to the original representation.
\end{theorem}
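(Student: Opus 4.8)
The plan is to exhibit an explicit rational (indeed integral) form of $S^\lambda$ coming from the polytabloid presentation recalled in Remark \ref{Tabloid}, and then to deduce self-duality from the general fact that a complex representation realizable over $\R$ has real-valued character.

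First I would work with the $\Z$-lattice $M^\lambda_\Z \subseteq M^\lambda$ spanned by the tabloids of shape $\lambda$. Since $\Sn_n$ merely permutes this spanning set, it acts on $M^\lambda_\Z$ by permutation matrices, in particular by matrices with entries in $\{0,1\}$. The polytabloids $v_T = \sum_{\sigma \in C(T)}\operatorname{sgn}(\sigma)\{\sigma(T)\}$ are integral vectors, so the sublattice $S^\lambda_\Z := \Span_\Z\{v_T\}$ is an $\Sn_n$-stable submodule of $M^\lambda_\Z$ with $S^\lambda_\Z \otimes_\Z \C \cong S^\lambda$. Being a finitely generated torsion-free $\Z$-module, $S^\lambda_\Z$ is free; fix a $\Z$-basis (one may take the standard polytabloids, but this is not needed). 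Because $\Sn_n$ preserves the lattice $S^\lambda_\Z$, every $\sigma \in \Sn_n$ is represented, in this basis, by an element of $\mathrm{GL}(S^\lambda_\Z) \subseteq \mathrm{GL}_d(\Z)$, hence in particular by a matrix with rational entries. This proves the first assertion; equivalently, $S^\lambda$ is defined over $\Q$ (in fact over $\Z$).

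For the second assertion I would pass to characters. By the first part, for every partition $\mu$ of $n$ the character $\chi_{S^\mu}$ takes values in $\Z$ (traces of integer matrices), hence in $\R$. An arbitrary complex representation $V$ of $\Sn_n$ decomposes as $V \cong \bigoplus_\mu m_\mu S^\mu$, so its character $\chi_V = \sum_\mu m_\mu \chi_{S^\mu}$ is real-valued. The dual representation $V^*$ has character $g \mapsto \overline{\chi_V(g)} = \chi_V(g)$, and since complex representations of a finite group are determined up to isomorphism by their characters, we conclude $V \cong V^*$.

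There is no serious obstacle here: the only point requiring any care is the appeal to the integral (or rational) Specht module and the standard fact that a $G$-stable lattice yields integral matrices in a lattice basis. One could streamline even further by skipping integrality altogether and working directly with the $\Q$-subspace $\Span_\Q\{v_T\}$, which is a finite-dimensional $\Sn_n$-stable $\Q$-form of $S^\lambda$; any $\Q$-basis then exhibits the action by rational matrices.
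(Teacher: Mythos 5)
Your proposal is correct. The paper itself offers no proof of this statement, treating it as a standard fact from the representation theory of $\Sn_n$ (with a pointer to texts such as \cite{CSST}), and your argument --- the $\Sn_n$-stable lattice (or $\Q$-span) generated by the polytabloids $v_T$ gives a rational form of $S^\lambda$, hence integer-valued characters, and real-valued characters force $V^* \cong V$ for every complex representation --- is precisely the standard textbook proof the paper is implicitly invoking, with no gaps.
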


The hook partitions $(n-k,k)$ are one particular example of a combinatorial object that will be very import for us going forward.

\begin{definition}
Let $\lambda$ be a fixed partition of some integer $m$, and let $n \geq m + \lambda_1$. Then we define the \textbf{padded partition}
\[
\lambda[n] := (n-|\lambda|,\lambda)
\]

We will typically write $S^\lambda_n$ for the irreducible representation corresponding to $\lambda[n]$. Moreover, we will implicitly assume that $S^\lambda_n = 0$ for any $n < |\lambda| + \lambda_1$.
\end{definition}

The representations $S^\lambda_n$ form the basis for what we call representation stability. Roughly speaking, one thinks of the partitions $S^\lambda_n$, with varying $n$, as being the ``same" representation manifested across different symmetric groups. Therefore, to see that a sequence of $\Sn_n$-representations, with $n$ varying, is ultimately expressible as a direct sum of irreducible representations of padded partitions with constant multiplicity and allowable $\lambda$, is a form of stability. Our next goal is to make this all precise.

\begin{definition}
Let $\FI$ denote the category whose objects are the sets $[n] = \{1,\ldots, n\}$, and whose morphisms are injections of sets. An \textbf{$\FI$-module over a ring $k$} is a functor,
\[
V: \FI \rightarrow k-\text{mod},
\]
from $\FI$ to the category of finitely generated $k$-modules. We will usually write $V_n := V([n])$ and $f_\ast := V(f)$. 

We say that an $\FI$-module $V$ is \textbf{finitely generated} if there is an integer $N$ such that for all $n \geq N$, the group $V_n$ is generated as an $\Sn_n$-module by the image of $V_N$ under the map induced by the standard inclusion $[N] \hookrightarrow [n]$ In this case we also say that $V$ is \textbf{generated in degrees $\leq N$}.
\end{definition}

Implicit in the above definitions is the fact that, for an $\FI$-module $V$, the group $V_n$ is always an $\Sn_n$-module. Indeed, this simply derives from the fact that the endomorphisms of an object in $\FI$ are precisely the permutations. One therefore arrives at the following ``concrete" description of an $\FI$-module: An $\FI$-module is a sequence of finitely generated $k[\Sn_n]$-modules, one for each $n \geq 0$, that are compatible with one another in accordance with maps induced by inclusions $[m] \hookrightarrow [n]$. The additional assumption of being finitely generated then imposes the condition that, whenever $n$ is sufficiently large, these compatibility maps become isomorphisms, up to the action of the symmetric group.

The following theorem collects some relevant conclusions that one draws from the fact that a given $\FI$-module is finitely generated.

\begin{theorem}[\cite{CEF}]\label{fgconsequence}
Let $V$ be a finitely generated $\FI$-module over a field $k$ of characteristic 0 that is generated in degrees $\leq N$. Then,
\begin{enumerate}
    \item There is a finite set $\Lambda \subseteq \bigcup_n \mathbb{Y}_n$, as well as non-negative integers $\{m_\lambda\}_{\lambda \in \Lambda}$, such that for all $n \gg 0$,
    \[
    V_n \cong \bigoplus_{\lambda \in \Lambda} m_\lambda S^\lambda_n;
    \]
    \item The set $\Lambda$ from the above part consists exclusively of partitions with at most $N$ parts;
    \item There exists a polynomial $p_V \in \Q[x_1,x_2,\ldots]$ of degree $\leq N$, such that for all $n \gg 0$, the character $\chi_n$ of $V_n$ satisfies,
    \[
    \chi_n(\sigma) = p_V(X_1(\sigma),X_2(\sigma),\ldots),
    \]
    where $X_i(\sigma)$ is the number of $i$-cycles in the decomposition of $\sigma$. In particular, there is a polynomial $p \in \Q[n]$ such that for all $n \gg 0$, $\dim_\Q V_n = p_V(n)$.
    
\end{enumerate}
\end{theorem}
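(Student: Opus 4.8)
The plan is to reduce to the representable (``free'') $\FI$-modules and then bootstrap using level-wise semisimplicity. For $m\geq 0$ let $M(m)$ be the $\FI$-module $[n]\mapsto k[\Hom_{\FI}([m],[n])]$ (unrelated to the permutation module $M^{\lambda}$ of Remark~\ref{Tabloid}); these are projective, and Yoneda's lemma shows that any $V$ generated in degrees $\leq N$ admits a surjection $\pi\colon P:=\bigoplus_{i=1}^{r}M(m_i)\twoheadrightarrow V$ with every $m_i\leq N$ (pick finitely many generators of $V$ lying in $\bigoplus_{m\leq N}V_m$, one summand each). So it suffices to prove the three conclusions for the $M(m)$ and then transfer them along $0\to K\to P\xrightarrow{\pi}V\to 0$.

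For the free case everything is explicit. The $\Sn_n$-set $\Hom_{\FI}([m],[n])$ of injections is transitive with point-stabilizer $\Sn_{n-m}$, so $M(m)_n\cong k[\Sn_n/\Sn_{n-m}]\cong\Ind_{\Sn_{n-m}\times\Sn_m}^{\Sn_n}(\triv\boxtimes k[\Sn_m])$; expanding $k[\Sn_m]=\bigoplus_{\mu\vdash m}(\dim S^{\mu})S^{\mu}$ and applying Pieri's rule, $M(m)_n$ is a direct sum of the $S^{\mu}_n$ over partitions $\mu$ with at most $m\leq N$ parts, with multiplicities independent of $n$ once $n$ is large --- this is (1) and (2) for $M(m)$. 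For (3), $\chi_{M(m)_n}(\sigma)$ counts the injections $f\colon[m]\hookrightarrow[n]$ with $\sigma\circ f=f$, i.e. the injections of $[m]$ into the fixed-point set of $\sigma$, namely $X_1(\sigma)\bigl(X_1(\sigma)-1\bigr)\cdots\bigl(X_1(\sigma)-m+1\bigr)$: a genuine polynomial of degree $m\leq N$ in $x_1$, valid for every $n$, with $\dim_k M(m)_n=n(n-1)\cdots(n-m+1)$. Multiplicities, characters, and dimensions being additive over finite direct sums, (1)--(3) hold for $P$.

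To transfer, work one degree at a time. Since $k$ has characteristic $0$, $0\to K_n\to P_n\to V_n\to 0$ splits as $\Sn_n$-modules, so $[V_n:S^{\mu}_n]=[P_n:S^{\mu}_n]-[K_n:S^{\mu}_n]$ for each $\mu$ and $\chi_{V_n}=\chi_{P_n}-\chi_{K_n}$. As $K_n\subseteq P_n$, the constituents of $V_n$ lie among those of $P_n$, which gives (2) for $V$ and bounds the eventual character degree by $N$; it remains to show $[K_n:S^{\mu}_n]$ is eventually constant and $\chi_{K_n}$ eventually a character polynomial. Because $\FI$ is a \emph{Noetherian} category over $k$, $K$ is itself finitely generated, so it is enough to prove (1) and (3) for an arbitrary finitely generated $V$, which I would do by induction on the generation degree. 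Let $S$ denote the shift functor $(SV)_n:=V_{n+1}$: it is exact, the torsion subfunctor $\ker(V\to SV)$ is finitely generated and torsion (hence $0$ in all large degrees), and a direct check gives $\mathcal DM(m):=\Coker(M(m)\to SM(m))\cong M(m-1)^{\oplus m}$, whence $\mathcal DV$ is generated in degrees $\leq N-1$. Discarding the (eventually vanishing) torsion, one gets for $n\gg 0$ a split exact sequence $0\to V_n\to\Ces^{\Sn_{n+1}}_{\Sn_n}V_{n+1}\to(\mathcal DV)_n\to 0$; by induction $(\mathcal DV)_n$ has a stable decomposition, and the branching rule for $\Ces^{\Sn_{n+1}}_{\Sn_n}$ turns this sequence into a recursion showing that each multiplicity sequence $n\mapsto[V_n:S^{\mu}_n]$ is, for $n$ large, an arithmetic progression --- necessarily constant, since it is bounded above by $[P_n:S^{\mu}_n]$. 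This is (1); conclusion (3) then follows from this stable decomposition together with the classical fact that each $\chi_{S^{\mu[n]}}$ is, for $n\gg 0$, a fixed character polynomial of degree $|\mu|$ --- itself provable in the same spirit by writing $S^{\mu[n]}$ through the Young permutation modules $M^{\lambda[n]}$ (whose characters count $\sigma$-stable ordered set partitions of prescribed block sizes, hence are explicit polynomials in the $X_i$) via the unitriangular Young's rule with its stable Kostka numbers.

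The genuinely hard ingredient is the Noetherianity of $\FI$ over $k$ --- that submodules of finitely generated $\FI$-modules are finitely generated --- which is what allows the kernel $K$, and the torsion subfunctor appearing in the induction, to be handled on the same footing as $V$ itself. This is a main theorem of \cite{CEF}, proved there by a separate Gr\"obner-basis/shift-functor argument; granting it, everything above is bookkeeping in the semisimple representation rings of the $\Sn_n$ and the combinatorics of the Pieri, branching, and Young rules.
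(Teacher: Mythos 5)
The paper does not prove this statement at all: it is quoted from \cite{CEF} as background, so there is no in-paper argument to compare yours against. What you have written is, in substance, a correct reconstruction of the standard proof: reduction to the representable modules $M(m)$ via a surjection from $P=\bigoplus_i M(m_i)$, the explicit Pieri and fixed-point-counting computations for those (your identification $M(m)_n\cong\Ind_{\Sn_{n-m}\times\Sn_m}^{\Sn_n}(\triv\boxtimes k[\Sn_m])$, the falling factorial $x_1(x_1-1)\cdots(x_1-m+1)$, and $\mathcal{D}M(m)\cong M(m-1)^{\oplus m}$ are all right), and then the transfer to general $V$ using Noetherianity together with the shift/derivative induction. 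You also correctly isolate Noetherianity of $\FI$ over $k$ as the one genuinely hard external input and attribute it to \cite{CEF} rather than reprove it, which is appropriate.

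The one step that is under-justified is the assertion that the branching recursion makes each individual multiplicity sequence $n\mapsto[V_n:S^{\mu}_n]$ eventually an arithmetic progression. The identity coming from $0\to V_n\to\Ces^{\Sn_{n+1}}_{\Sn_n}V_{n+1}\to(\mathcal{D}V)_n\to 0$ is $[V_{n+1}:S^{\mu}_{n+1}]+\sum_{\mu^{+}}[V_{n+1}:S^{\mu^{+}}_{n+1}]=[V_n:S^{\mu}_n]+[(\mathcal{D}V)_n:S^{\mu}_n]$, where $\mu^{+}$ runs over partitions obtained from $\mu$ by adding a corner below the first row; so the first difference of $[V_n:S^{\mu}_n]$ is eventually constant only after the $\mu^{+}$-multiplicities are already known to be eventually constant. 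This is repairable with machinery you already have on the table: part (2), which you deduce from the surjection $P\twoheadrightarrow V$, bounds $|\mu|$ for all constituents that can occur, so a descending induction on $|\mu|$ (starting at the maximal size, where no $\mu^{+}$ occurs) decouples the recursion, and boundedness by the eventually constant $[P_n:S^{\mu}_n]$ then forces each eventually-arithmetic integer sequence to be eventually constant, exactly as you say. With that emendation, and granting Noetherianity, the argument is complete.
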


\begin{remark}
There has also been considerable work done in consequences of finite generation on the modular side. While we do not consider the modular situation in this work, one can see \cites{Kriz,NS} for more information.
\end{remark}

In later sections we will consider other representation stability type structures such as $\FI$-arrangements (Definition \ref{fiarr}), and $\FS^{op}$-modules (Section \ref{fsmod}). Each of these topics will be covered as needed.

\subsection{Hyperplane arrangements and their Kazhdan-Lusztig polynomials}

To begin this section we review the construction of \cite{TVY}, of the intrinsic hyperplane arrangement associated with the representation $S^\lambda$.

\textbf{For the remainder of this section we fix a partition $\lambda$ of some non-negative integer $n$.}

\begin{definition}\label{intrinsic}
Let $\lambda^\dagger$ denote the \textbf{conjugate} partition of $\lambda$, obtained from $\lambda$ by transposing its Young diagram. If $\alpha \in \mathbb{P}_n(\lambda^\dagger)$ and $\alpha $ has parts $[n] = \cup_i \alpha_i$, we define $\Sn_\alpha$ to be the subgroup $\prod_i \Sn_{\alpha_i}$ of $\Sn_n$. We will also write $T_\alpha$ to denote a set of transpositions that generate the group $S_\alpha$.

With notation as above, we may now define the \textbf{intrinsic hyperplane arrangement of $S^\lambda$} to be that comprised of the hyperplanes
\[
H_{\alpha} := \sum_{\tau \in T_\alpha} \left( S^\lambda \right)^\tau,
\]
where $\left(S^\lambda\right)^\tau$ invariant subspace of $S^\lambda$ with respect to the transposition $\tau$. We write $\mathcal{A}_\lambda$ for this hyperplane arrangement.
\end{definition}

It is entirely non-obvious that the above definition is well defined. However, it is shown in \cite{TVY} that the spaces $H_{\alpha}$ are hyperplanes, and that they do not depend on the choice of generating transpositions $T_\alpha$.

One should observe that for any permutation $\sigma \in \Sn_n$, the action of $\sigma$ on $S^{\lambda}$ descends to an action of $\sigma$ on $\mathcal{A}_\lambda$. Indeed, it isn't hard to see that
\[
\sigma \cdot H_{\alpha} = H_{\sigma\cdot \alpha}
\]
for any $\alpha \in \mathbb{P}_n(\lambda^\dagger)$. Indeed, this just follows from the fact that for any transposition $\tau$, $v \in S^\lambda$ is fixed by $\tau$ if and only if $\sigma \cdot v$ is fixed by $\sigma \tau \sigma^{-1}$. Later we will see that this action on the hyperplane arrangement can be lifted to an action by the category $\FI$.

\begin{example}
Let $\lambda = (n-1,1)$. In this case we have seen that $S^\lambda$ is the standard representation. Moreover, we have that $\lambda^\dagger = (2,1^{n-k})$. Therefore any $\alpha \in \mathbb{P}_n(\lambda^\dagger)$ is uniquely determined by a subset of $[n]$ of size two. We will abuse notation here and just think of $\alpha$ as being this subset.

If $\alpha = \{i,j\}$, then $T_\alpha = \{(ij)\}$ and $H_\alpha$ is the subspace of $S^\lambda$ obtained from the hyperplane $x_i = x_j$ in $\C^n$ by orthogonal projection onto $S^\lambda$.

Of course, the hyperplane arrangement $x_i = x_j$ in $\C^n$ is an extremely important arrangement. We therefore can think of $\mathcal{A}_\lambda$ as this arrangement with the (non-trivial) intersection of all the hyperplanes removed. That is to say, the braid arrangement. In \cite{TVY}, the arrangement $\mathcal{A}_\lambda$ is explored more deeply in the cases of other hook partitions. In fact, it is argued that for $k \geq 2$, the hook partition $(n-k,1^k)$ corresponds to a hyperplane arrangement $\mathcal{A}_\lambda$ whose complimentary space is \emph{not} $K(\pi,1)$. This lands in pretty stark contrast to the situation of the braid arrangement, which was famously proven to be $K(\pi,1)$ by Arnol'd \cite{Ar}. 
\end{example}

Moving away from the specific case of the intrinsic hyperplane arrangement, we conclude this section by briefly touching on the theory of Kazhdan-Lusztig polynomials of complex hyperplane arrangements. These objects were defined at the level of general matroids in \cite{EPW}, where it is shown that in the realizable setting (i.e. the hyperplane arrangement setting) they could be interpreted purely geometrically. This is the approach we consider in this work. Following this pioneering work, \cite{GPY} then considered cases wherein there was a natural group action on the matroid or hyperplane arrangement. This equivariant version of the theory has shown itself to be very naturally amenable to tools in representation stability and related machinery \cite{fs-braid, PR-trees,PR-genus}. We continue this trend here.

\begin{definition}\label{KLdef}
Let $(V,\mathcal{A})$ denote a complex hyperplane arrangement \footnote{In this work all hyperplane arrangements are assumed to be central, and finite.} and for each $H \in \mathcal{A}$ choose an orthogonal vector $a(H)$. The \textbf{complement} of $\mathcal{A}$ is then defined to be the variety
\[
\mathcal{M}(\mathcal{A}) = \{v \in V \mid \langle v,a(H) \rangle \neq 0 \text{ for all $H \in \mathcal{A}$}\}
\]
The variety $\mathcal{M}(\mathcal{A})$ can be embedded into the torus $(\C^\times)^{\mathcal{A}}$ by setting the $H$-th coordinate to be $\langle a(H), \bullet \rangle$. We define $\mathcal{M}^{-1}(\mathcal{A})$ to be the image of $\mathcal{M}(\mathcal{A})$ under the involution of the torus which inverts every coordinate. Finally, the \textbf{reciprocal plane} $X_{\mathcal{A}}$ is then defined to be the closure of $\mathcal{M}^{-1}(\mathcal{A})$ in the affine space $\C^{\mathcal{A}}$.

The Kazhdan-Lusztig polynomial of the hyperplane arrangement $\mathcal{A}$, $p_{\mathcal{A}}(t)$, is defined to be the Poincar\' e polynomial associated to the \emph{intersection cohomology}. That is to say, the coefficient in front of $t^i$ in $p_{\mathcal{A}}(t)$ is precisely given by $\dim_\C IH^{2i}(X_{\mathcal{A}};\C)$.

If the arrangement $\mathcal{A}$ is equipped with an action by some finite group $\Gamma$, then this action descends to an action on intersection homology. In this case we may define the \textbf{equivariant Kazhdan-Lusztig polynomial} $p_\mathcal{A}^\Gamma (t)$ to be the formal polynomial whose coefficient of $t^i$ is the virtual representation associated with $IH^{2i}(X_{\mathcal{A}};\C)$.
\end{definition}

\begin{example}
We note that if $\Gamma = \{1\}$ is the trivial group, then the two kinds of Kazhdan-Lusztig polynomials coincide. For a non-trivial example, let $\mathcal{A} = \{H_{i,j}\}$ denote the intrinsic arrangement of $S^{(n,1)}$. In this case, both the equivariant (with respect the action of $\Sn_n$) and standard Kazhdan-Lusztig polynomials are fairly well understood both theoretically \cite{FL}, as well as computationally \cite{GPY,fs-braid}. Using techniques coming from the theory of representation stability, it was shown in \cite{fs-braid} that the $i$-th coefficient of the Kazhdan-Lusztig polynomial of $\mathcal{A}$ must grow like a particular kind of exponential in $n$. On the equivariant side, it was also shown that the irreducible representations that appear as coefficients have restrictions in the shapes of their associated partitions.

Our primary theorem on Kazhdan-Lusztig polynomials of intrinsic arrangements (Theorem \ref{bestwecando} will partially generalize the main theorem of \cite{fs-braid} to all intrinsic hyperplane arrangements of hooks. Through explicit computations, however, we will see that the full strength of the theorems in \cite{fs-braid} do not continue to be true beyond the case of the braid arrangement (Section \ref{notStable}).
\end{example}

\section{The complements of intrinsic arrangements}\label{sec:Comp}

In this section we prove our first representation stability theorem \ref{CompStab}. To begin our proof we must first recall some notation from Gadish's work on $\FI$-arrangements \cite{Gad}.

\begin{definition}\label{fiarr}
Let $(V,\mathcal{A})$ and $(W,\mathcal{B})$ be linear hyperplane arrangements over $\C$. A \textbf{morphism} between them $\varphi:(V,\mathcal{A}) \rightarrow (W,\mathcal{B})$ is a surjection of vector spaces $\varphi^\ast:W \rightarrow V$ such that for any $H \in \mathcal{B}$, one has that $\varphi^{-1}(H) \in \mathcal{A}$. In this way, a morphism of arrangements induces both:
\begin{itemize}
    \item A regular map $\mathcal{M}(\mathcal{B}) \rightarrow \mathcal{M}(\mathcal{A})$ and;
    \item A map between the semi-lattices induced by the intersections of the hyperplanes defining $\mathcal{A}$ and $\mathcal{B}$, respectively. We write $\mathcal{L}(\mathcal{A})$ to denote the intersection semi-lattice of $\mathcal{A}$. We usually will refer to this as the \textbf{lattice of flats} of $\mathcal{A}$
\end{itemize}

An $\FI$-arrangement is a functor $(V_\bullet,\mathcal{A}_\bullet)$ from $\FI$ to the category of arrangements. We say that an $\FI$-arrangement $(V_\bullet,\mathcal{A}_\bullet)$ is \textbf{finitely generated} if there is an integer $N$ such that for all $n \geq N$ and every subspace $S \in \mathcal{L}(\mathcal{A}_n)$, $S$ can be written as an intersection of preimages of subspaces coming from $\mathcal{L}(\mathcal{A}_N)$. 

Finally, an $\FI$-arrangement is \textbf{normal} if for every morphism $f:[m] \hookrightarrow [n]$ in $\FI$ and every subspace $S \in \mathcal{L}(\mathcal{A}_n)$ that contains the kernel of the map induced by $f$ on vector spaces, $S$ is in the image of the map induced by $f$ on the semi-lattice of intersections
\end{definition}

\begin{example}

Consider the $\FI$-arrangement $(V_\bullet,\mathcal{A}_\bullet)$, where $V_n = \C^n$ and $\mathcal{A}_n$ is the collection of hyperplanes defined by $x_i = x_j$ for each $i \neq j$. Note in this case the maps being induced by the morphisms of $\FI$ are the forgetful maps on $\C^n$.

We claim that this $\FI$-arrangement is both finitely generated and normal. Indeed, for any $n \geq 2$ every subspace $S \in \mathcal{L}(\mathcal{A}_n)$ is in natural bijection with a set partition of $[n]$, i.e. the subspace which equates those coordinates that are in the same part. This, however, is clearly just the intersection of the preimages of $x_1 = x_2$ in $\mathcal{A}_2$ along the various forgetful maps. For normality, let $f:[m] \hookrightarrow [n]$ be an injection of sets. The kernel of $f^\ast:\C^n \rightarrow \C^m$ is comprised of precisely those vectors that are zero in the coordinates corresponding to the image of $f$. Letting the image of $f$ be the set $\{i_1,\ldots,i_m\}$, we immediately see that the subspaces $S \in \mathcal{L}(\mathcal{A}_n)$ that contain this kernel are those that only equate the coordinates corresponding to some subset of the indices in $\{i_1,\ldots,i_m\}$. However, any such subspace is clearly in the image of the map induced by $f$ on the intersection lattice.

\end{example}

Our primary interest in this machinery is the following theorem, proven by Gadish in \cite{Gad}.

\begin{theorem}[Theorem A, \cite{Gad}]\label{GadThm}
If $(V_\bullet,\mathcal{A}_\bullet)$ is a finitely generated and normal $\FI$-arrangement, then for any $i \geq 0$ the rational cohomology of the complement space,
\[
H^i(\mathcal{M}(\mathcal{A}_\bullet);\Q)
\]
forms a finitely generated $\FI$-module over $\Q$.
\end{theorem}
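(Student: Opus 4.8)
The plan is to avoid the geometry of the complements $\mathcal{M}(\mathcal{A}_n)$ and to work instead with the Orlik--Solomon presentation of their cohomology, which is manifestly functorial. Recall the classical Orlik--Solomon isomorphism of graded algebras $H^\bullet(\mathcal{M}(\mathcal{A});\Q)\cong A(\mathcal{A})$, valid for every central complex arrangement $(V,\mathcal{A})$: here $A(\mathcal{A})$ is the quotient of the exterior algebra $\bigwedge^\bullet\!\big(\bigoplus_{H\in\mathcal{A}}\Q\,e_H\big)$ by the ideal generated by the boundaries $\partial e_S$ of the circuits $S$ of $\mathcal{A}$, and under the isomorphism $e_H$ corresponds, in degree one, to the logarithmic form $d\alpha_H/\alpha_H$ for a linear functional $\alpha_H$ cutting out $H$. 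First I would establish that this isomorphism is functorial for morphisms of arrangements. A morphism $\varphi$ with underlying surjection $\rho=\varphi^\ast$ carries a hyperplane $H$ to $\tilde H:=\rho^{-1}(H)$, which is again a hyperplane because $\rho$ is surjective; and since the transpose of $\rho$ is injective, it preserves and reflects linear dependence among defining functionals, so circuits map to circuits and the rule $e_H\mapsto e_{\tilde H}$ descends to a homomorphism of graded algebras $A(\mathcal{A})\to A(\mathcal{B})$. Pulling $d\alpha_H/\alpha_H$ back along the induced regular map $\mathcal{M}(\mathcal{B})\to\mathcal{M}(\mathcal{A})$ produces $d(\alpha_H\!\circ\rho)/(\alpha_H\!\circ\rho)=d\alpha_{\tilde H}/\alpha_{\tilde H}$, which identifies this homomorphism with the cohomology pullback. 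Consequently $n\mapsto A(\mathcal{A}_n)^i\cong H^i(\mathcal{M}(\mathcal{A}_n);\Q)$ is an $\FI$-module, and it is an $\FI$-module quotient of $E^i_\bullet$, where $E^i_n:=\bigwedge^i\!\big(\bigoplus_{H\in\mathcal{A}_n}\Q\,e_H\big)$ and the transition map of an injection $f$ sends $e_{H_1}\wedge\cdots\wedge e_{H_i}$ to $e_{\tilde H_1}\wedge\cdots\wedge e_{\tilde H_i}$.

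Because a quotient of a finitely generated $\FI$-module is finitely generated, it now suffices to prove that $E^i_\bullet$ is finitely generated; for this I would first show that every hyperplane of $\mathcal{A}_n$ lies in the image of a hyperplane of $\mathcal{A}_N$ under one of the induced maps $\mathcal{A}_N\to\mathcal{A}_n$, where $N$ is the bound supplied by finite generation of the $\FI$-arrangement. Fix $n\ge N$ and such a hyperplane $H$. By hypothesis $H$ is an intersection of preimages, along injections $[N]\hookrightarrow[n]$, of flats of $\mathcal{A}_N$, and each of these preimage maps is a surjection of vector spaces, so it carries proper subspaces to proper subspaces and preserves codimension. Every term of the intersection contains $H$; any term other than $V_n$ is therefore a proper subspace containing the codimension-one subspace $H$, hence has codimension exactly one and equals $H$. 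Since $H\ne V_n$ at least one such term occurs, and being a codimension-one preimage it is the preimage of a single hyperplane of $\mathcal{A}_N$, which accordingly maps to $H$. (Only finite generation of the $\FI$-arrangement is used here; normality plays no role in this particular conclusion.)

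To upgrade this to finite generation of $E^i_\bullet$, take any nonzero basis element $e_{\tilde H_1}\wedge\cdots\wedge e_{\tilde H_i}$ of $E^i_n$; by the previous paragraph each $\tilde H_j$ is the image of some $K_j\in\mathcal{A}_N$ under the map induced by an injection $f_j:[N]\hookrightarrow[n]$. The union $U=\bigcup_j f_j([N])\subseteq[n]$ has size $m\le iN$, and a bijection $[m]\cong U$ yields an injection $h:[m]\hookrightarrow[n]$ through which every $f_j$ factors; functoriality of the transition maps then gives $e_{\tilde H_1}\wedge\cdots\wedge e_{\tilde H_i}=h_\ast\!\big(e_{L_1}\wedge\cdots\wedge e_{L_i}\big)$ for hyperplanes $L_j\in\mathcal{A}_m$, which are pairwise distinct because the map $\mathcal{A}_m\to\mathcal{A}_n$ induced by $h$ is injective. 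Hence $E^i_\bullet$ is generated in degrees $\le iN$, and since each $\mathcal{A}_m$ is finite, $E^i_\bullet$ is a finitely generated $\FI$-module; therefore so is its quotient $H^i(\mathcal{M}(\mathcal{A}_\bullet);\Q)$. The same argument works with $\Z$-coefficients, since the Orlik--Solomon isomorphism is already defined integrally.

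The step I expect to demand the most care is the functoriality of the Orlik--Solomon isomorphism in the first paragraph: verifying that the topologically defined pullback $H^i(\mathcal{M}(\mathcal{A}_n))\to H^i(\mathcal{M}(\mathcal{A}_m))$ really coincides with the combinatorial map $e_H\mapsto e_{\tilde H}$, and that this combinatorial map is well defined, i.e.\ respects the Orlik--Solomon relations. Everything after that is bookkeeping with $\FI$-modules, and the hypothesis of the theorem enters only through the elementary codimension count in the second paragraph.
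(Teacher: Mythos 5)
Your argument is correct, but it is genuinely different from the one the paper invokes: the paper does not prove Theorem \ref{GadThm} at all, it quotes it from Gadish, whose proof works at the level of the spaces and stratifications (this is why the hypothesis of normality appears, and why the result extends to \'etale cohomology and to general subspace arrangements where no Orlik--Solomon presentation is available). You instead reduce everything to the degree-one part of the Orlik--Solomon algebra: hyperplanes of $\mathcal{A}_n$ are generated in degree $N$ by a clean codimension count from finite generation of the $\FI$-arrangement, exterior powers are generated in degree $iN$ by the usual union-of-images argument, and $H^i$ is a quotient of $\bigwedge^i H^1$, hence finitely generated. I checked the individual steps and see no gap: the preimage of a flat along a surjection preserves codimension, so a hyperplane forced to be an intersection of such preimages must coincide with one of them; the map $e_H\mapsto e_{\tilde H}$ respects dependencies because precomposition with a surjection is injective on functionals; and this combinatorial map agrees with the topological pullback via the logarithmic-form model. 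Two remarks on what the comparison buys. First, you correctly observe that normality is never used; for \emph{hyperplane} arrangements (which is all Definition \ref{fiarr} allows) your argument shows the conclusion follows from finite generation alone, whereas normality is genuinely needed in Gadish's broader setting. Second, the paper itself anticipates exactly your proof in the remark following Theorem \ref{compfg}, where the same $\bigwedge^i H^1 \twoheadrightarrow H^i$ reduction is used to extract generation-degree bounds, and where the trade-off is stated: your route proves the statement ``at the level of algebra,'' while Gadish's machinery proves finite generation of the spaces themselves, which is what the paper wants for applications beyond singular cohomology. So your proposal is a valid, more elementary proof of the stated theorem, at the cost of the stronger topological content that motivates citing \cite{Gad} in the first place.
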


Our way forward has therefore been made clear. We must show,
\begin{itemize}
    \item For any partition $\lambda$, the collection of hyperplane arrangements $(S^{\lambda}_n,\mathcal{A}_{\lambda[n]})$ can be glued together in some natural way to form an $\FI$-arrangement and,
    \item that this $\FI$-arrangement is both finitely generated and normal.
\end{itemize}

We take on these two problems in turn.

To begin, fix a partition $\lambda$ and recall from \cite{CEF} that there exists a (finitely generated) $\FI$-module over $\Q$ whose evaluation at $[n]$ is the module $S^{\lambda}_n$ so long as $n \geq \lambda_1 + |\lambda|$ and the zero module otherwise. Denote this $\FI$ module by $S^{\lambda}_\bullet$. By dualizing, one obtains an $\FI^{op}$-module whose evaluation at $[n]$ is the module
\[
(S^{\lambda}_n)^\vee \cong S^{\lambda}_n,
\]
by Theorem \ref{selfdual}. In fact, for any injection $f:[m] \hookrightarrow [n]$, the maps induced by $f$ on the original $\FI$-module, $f_\ast$, and its dual, $f^\ast$, are partial inverses of one another. This grants us that the map induced by $f:[m] \rightarrow [n]$,
\[
f^\ast:S^{\lambda}_n \rightarrow S^{\lambda}_m
\]
is surjective, and that for any $\alpha \in \mathbb{P}_m(\lambda[m]^\dagger)$
\[
(f^{\ast})^{-1}(H_{\alpha}) = H_{f(\alpha)} \in \mathbb{P}_n(\lambda[n]^\dagger)
\]
In particular, we have an $\FI$-arrangement whose evaluation at each $n$ is precisely the intrinsic hyperplane arrangement,
\[
(S^{\lambda}_n,\mathcal{A}_{\lambda[n]}),
\]
as desired. To prove our representation stability theorem it therefore remains to show that this $\FI$-arrangement is both finitely generated and normal.

\begin{proposition} \label{fgnorm}
The $\FI$-arrangement $(S^{\lambda}_\bullet,\mathcal{A}_{\lambda[\bullet]})$ is both finitely generated and normal.
\end{proposition}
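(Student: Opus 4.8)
The plan is to verify the two conditions of Definition~\ref{fiarr} separately, reducing both to the combinatorics of the conjugate partitions $\lambda[n]^\dagger$ and the way set partitions of shape $\lambda[n]^\dagger$ behave under the transition maps. The key structural observation, already recorded before the statement, is that for an injection $f:[m]\hookrightarrow[n]$ the dual map $f^\ast:S^\lambda_n\to S^\lambda_m$ is surjective and satisfies $(f^\ast)^{-1}(H_\alpha)=H_{f(\alpha)}$; I would first make this picture completely explicit by unwinding what $\mathbb{P}_n(\lambda[n]^\dagger)$ looks like. Since $\lambda[n]=(n-|\lambda|,\lambda_1,\dots,\lambda_h)$, its conjugate $\lambda[n]^\dagger$ has first column of length $h+1$ (independent of $n$), and as $n$ grows the only thing that changes is the number of columns of height one (equivalently, the number of singleton blocks in a set partition of shape $\lambda[n]^\dagger$). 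So a flat $H_\alpha$ is indexed, up to reordering, by a partition of $[n]$ with a bounded "non-singleton part'' of shape controlled by $\lambda$, together with an unbounded supply of singletons. This is the statement to nail down first, because both finite generation and normality will follow by comparing such configurations at level $n$ with their restrictions to a fixed finite level $N$.

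For \textbf{finite generation} I would take $N$ large enough (something like $N = |\lambda|+\lambda_1+ (\text{number of parts of }\lambda)$, to be pinned down) so that every shape $\lambda[N]^\dagger$ already exhibits at least one height-one column, and then argue that an arbitrary flat $S\in\mathcal{L}(\mathcal{A}_{\lambda[n]})$ of arbitrary rank is an intersection of preimages of rank-one flats $H_\beta\in\mathcal{L}(\mathcal{A}_{\lambda[N]})$ under the various maps $f^\ast$. The point is that, just as in the braid-arrangement example in the excerpt, a flat corresponds to a choice of how to merge blocks, and each individual hyperplane $H_\alpha$ at level $n$ is the preimage of a hyperplane $H_\beta$ at level $N$ under an injection $f:[N]\hookrightarrow[n]$ whose image contains the support of the "non-singleton'' part of $\alpha$ together with enough extra points to fill out $\lambda[N]^\dagger$; since $S$ is the intersection of the $H_\alpha$ containing it, writing each $H_\alpha$ this way exhibits $S$ as required. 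Here I would lean on the identity $(f^\ast)^{-1}(H_\alpha)=H_{f(\alpha)}$ together with the fact that $\mathcal{M}(\mathcal{A}_\lambda)$, hence $\mathcal{L}(\mathcal{A}_\lambda)$, does not depend on the choice of generating transpositions $T_\alpha$ (from \cite{TVY}).

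For \textbf{normality}, let $f:[m]\hookrightarrow[n]$ with image $\{i_1,\dots,i_m\}$ and let $S\in\mathcal{L}(\mathcal{A}_{\lambda[n]})$ contain $\ker f^\ast$. Exactly as in the braid example, $\ker f^\ast$ is (a copy inside $S^\lambda_n$ of) the subspace of vectors supported off $\{i_1,\dots,i_m\}$, and I would show that a flat containing it can only "merge'' indices lying in $\{i_1,\dots,i_m\}$ — any hyperplane $H_\alpha$ whose associated block structure touches an index outside the image fails to contain $\ker f^\ast$, because the corresponding invariant subspace $(S^\lambda_n)^\tau$ for a transposition $\tau$ moving such an index does not contain all of $\ker f^\ast$. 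Consequently $S$ is an intersection of hyperplanes $H_\alpha$ with $\alpha$ supported on the image of $f$, and each such $H_\alpha = (f^\ast)^{-1}(H_{f^{-1}(\alpha)})$ lies in the image of the induced semilattice map, so $S$ does too. The main obstacle is the representation-theoretic input that controls which invariant subspaces $(S^\lambda_n)^\tau$ contain $\ker f^\ast$ — i.e.\ translating the clean set-partition combinatorics of the braid case into the intrinsic setting, where the hyperplanes are sums $\sum_{\tau\in T_\alpha}(S^\lambda)^\tau$ rather than literal coordinate hyperplanes; I expect this to require the explicit tabloid/polytabloid model of $S^\lambda$ from Remark~\ref{Tabloid} to identify $\ker f^\ast$ and the fixed spaces $(S^\lambda_n)^\tau$ concretely enough to compare them.
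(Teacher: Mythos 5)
Your proposal follows essentially the same route as the paper: finite generation comes from the fact that the hyperplanes are indexed by set partitions of shape $\lambda[n]^\dagger$ whose non-singleton blocks have total size bounded by $|\lambda|+\lambda_1$ (so every flat is an intersection of preimages of hyperplanes from a fixed finite level), and normality comes from the characterization that $H_\alpha$ contains $\ker f^\ast$ exactly when the non-singleton blocks of $\alpha$ lie in the image of $f$. The one step you flag as an obstacle — verifying that characterization inside $S^\lambda_n$ via the tabloid model — is precisely the step the paper itself dispatches with a brief "tracing through definitions," so your plan matches the published argument in both structure and level of detail.
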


\begin{proof}
To begin, we will continue to use the convention that a set partition $\alpha \in \mathbb{P}_n(\lambda[n]^\dagger)$ is identified with the pieces of the partition of size strictly greater than 1. In this way $\mathbb{P}_n(\lambda[n]^\dagger)$ becomes an $\Sn_n$ set by the obvious action. In fact, one may think of $\mathbb{P}_n(\lambda[n]^\dagger)$ as being a (finitely generated) $\FI$-set in the sense of \cite{RSW}. In particular, for any $n \gg 0$ and any finite collection of elements in $\mathbb{P}_n(\lambda[n]^\dagger)$, $\{\alpha_i\}_i$, one may find injections $f_i:[n-1] \hookrightarrow [n]$ and partitions $\{\beta_i\}_i$ such that for each $i$, $\alpha_i = f_i(\beta_i)$. Finite generation now follows from this as well as the fact that the action of $\FI$ on $\mathcal{A}_{\lambda[\bullet]}$ agrees with its action on $\mathbb{P}_\bullet(\lambda[\bullet]^\dagger)$.

It therefore remains to show that these arrangements are normal. Let $f:[m] \hookrightarrow [n]$ be an injection of sets. We first hope to understand which subspaces in $\mathcal{L}(\mathcal{A}_{\lambda[n]})$ will contain the kernel of the map induced by $f$ on the underlying vector spaces. More specifically, we only need to understand when a given hyperplane $H_{\alpha}$ contains the kernel of this map, for $\alpha \in \mathbb{P}_n(\lambda[n]^\dagger)$. Tracing through definitions we find that $H_{\alpha}$ will contain the kernel of the map induced by $f$ whenever the parts of $\alpha$ only contain elements from the image of $f$. Now given some intersection of hyperplanes with this condition on $\alpha$, We find that it will be in the image of the map induced by $f$ on the intersection semi-lattice by looking at the intersection of hyperplanes whose associated set partition contains the relevant preimages (under $f$) of those image values that appeared in the original hyperplanes' associated partition.
\end{proof}

\begin{remark}
In the original cited work \cite{Gad}, the author is able to conclude the stronger condition of $H^i(\mathcal{M}(\mathcal{A}_\bullet);\Q)$ being free as an $\FI$-module, provided that the $\FI$-arrangement satisfies a condition he calls \textbf{continuity}. Our primary examples of interest, namely the intrinsic hyperplane arrangements, are unfortunately not always continuous! Indeed, consider the case where $\lambda = (1,1,1)$. Then the following pullback diagram in $\FI$,
\[
\begin{CD}
\emptyset @>>> [4]\\
@VVV           @V (12) VV\\
[4]       @>(34)>> [4]
\end{CD}
\]
is mapped to the following diagram by $S^{\lambda}_\bullet$,
\[
\begin{CD}
0 @<<< S^{(1,1,1,1)}\\
@AAA           @A -1 AA\\
S^{(1,1,1,1)}       @<-1<< S^{(1,1,1,1)}
\end{CD}
\]
This diagram is not a pushout diagram in the category of vector spaces, thus we must conclude that the associated $\FI$-arrangement is not continuous.
\end{remark}

Proposition \ref{fgnorm} is all we need to conclude the main theorem of this section.

\begin{theorem}\label{compfg}
Fix a partition $\lambda$ and $i \geq 0$, and write $V = H^i(\mathcal{M}(\mathcal{A}_{\lambda[\bullet]});\Q)$. Then,
\begin{enumerate}
    \item There is a finite set $\Lambda \subseteq \bigcup_n \mathbb{Y}_n$, as well as non-negative integers $\{m_\mu\}_{\mu \in \Lambda}$, such that for all $n \gg 0$,
    \[
    V_n \cong \bigoplus_{\mu \in \Lambda} m_\mu S^\mu_n;
    \]
    \item The set $\Lambda$ from the above part consists exclusively of partitions with at most $i*(|\lambda| + \lambda_1)$ parts;
    \item There exists a polynomial $p_V \in \Q[x_1,x_2,\ldots]$ of degree $\leq i\cdot(|\lambda| + \lambda_1)$, such that for all $n \gg 0$, the character $\chi_n$ of $V_n$ satisfies,
    \[
    \chi_n(\sigma) = p_V(X_1(\sigma),X_2(\sigma),\ldots),
    \]
    where $X_i(\sigma)$ is the number of $i$-cycles in the decomposition of $\sigma$. In particular, there is a polynomial $p \in \Q[n]$ such that for all $n \gg 0$, $\dim_\Q V_n = p_V(n)$.
\end{enumerate}
\end{theorem}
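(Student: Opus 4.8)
The plan is to deduce Theorem~\ref{compfg} from Gadish's Theorem~\ref{GadThm} together with the structural Theorem~\ref{fgconsequence}, so that the only genuine work is to pin down an explicit degree in which $V$ is generated. By Proposition~\ref{fgnorm} the $\FI$-arrangement $(S^\lambda_\bullet,\mathcal{A}_{\lambda[\bullet]})$ is finitely generated and normal, so Theorem~\ref{GadThm} already shows that $V = H^i(\mathcal{M}(\mathcal{A}_{\lambda[\bullet]});\Q)$ is a finitely generated $\FI$-module over $\Q$. Parts (1) and (3), as well as the ``at most $N$ parts'' half of part (2), are then immediate from Theorem~\ref{fgconsequence} once we produce an $N$ with $V$ generated in degrees $\leq N$; the real content of the theorem is thus the claim that one may take $N = i\cdot(|\lambda|+\lambda_1)$.

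First I would record a purely combinatorial fact about the hyperplanes. Write $c = |\lambda|+\lambda_1$, and for $\alpha \in \mathbb{P}_n(\lambda[n]^\dagger)$ call the union of the blocks of $\alpha$ of size $>1$ the \emph{support} of $\alpha$ (in the notational convention of Proposition~\ref{fgnorm} this is the set on which $H_\alpha$ records the data of $\alpha$). A short bookkeeping with conjugate partitions shows the support always has exactly $c$ elements: for $n$ large, $\lambda[n]^\dagger$ has $\lambda[n]_1 = n-|\lambda|$ parts, of which the $j$-th exceeds $1$ exactly when $\lambda[n]_2 = \lambda_1 \geq j$, i.e.\ precisely $\lambda_1$ of them exceed $1$; hence $\alpha$ has $(n-|\lambda|)-\lambda_1$ singleton blocks, so its non-singleton blocks cover $n-\big((n-|\lambda|)-\lambda_1\big) = c$ elements. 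Consequently the $\FI$-set $n \mapsto \mathbb{P}_n(\lambda[n]^\dagger)$ of hyperplanes is generated in degree $c$: any $\alpha$ equals $f(\beta)$ for an injection $f\colon[c]\hookrightarrow[n]$ with image the support of $\alpha$ and some $\beta \in \mathbb{P}_c(\lambda[c]^\dagger)$.

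Next I would bootstrap this to cohomology. The graded ring $H^\bullet(\mathcal{M}(\mathcal{A}_{\lambda[\bullet]});\Q)$ is a graded $\FI$-algebra, since every transition map is a pullback and hence a ring homomorphism, and by the Orlik--Solomon presentation it is generated in cohomological degree $1$, with $H^1$ spanned by the classes $e_{H_\alpha}$. Because the transition map along $f\colon[m]\hookrightarrow[n]$ carries $e_{H_\beta}$ to $e_{H_{f(\beta)}}$ (the pullback of a defining logarithmic form for $H_\beta$ is a defining form for $(f^\ast)^{-1}(H_\beta)=H_{f(\beta)}$), the previous paragraph shows each $e_{H_\alpha} \in H^1(\mathcal{M}(\mathcal{A}_{\lambda[n]}))$ lies in the image of the transition map from $H^1(\mathcal{M}(\mathcal{A}_{\lambda[c]}))$. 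A general element of $H^i(\mathcal{M}(\mathcal{A}_{\lambda[n]}))$ is a $\Q$-linear combination of $i$-fold products $e_{H_{\alpha_1}}\cdots e_{H_{\alpha_i}}$; writing each factor as a pushforward from degree $c$ along some $f_j\colon[c]\hookrightarrow[n]$, letting $h\colon[s]\hookrightarrow[n]$ have image $\bigcup_j \operatorname{im}(f_j)$ (so $s \leq ic$) and factoring $f_j = h\circ g_j$, the product equals $h_\ast\big(\prod_j (g_j)_\ast x_j\big)$ since $h_\ast$ is a ring homomorphism, hence lies in the image of the transition map from degree $s \leq ic$. Thus $V$ is generated in degrees $\leq i\cdot(|\lambda|+\lambda_1)$, and Theorem~\ref{fgconsequence} applied with $N = i\cdot(|\lambda|+\lambda_1)$ delivers all three assertions.

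The step I expect to be the main obstacle is the last one: one must make sure that multiplying cohomology classes does not push the $\FI$-generation degree past the sum of the factors' degrees. This works here precisely because $H^\bullet$ carries a compatible $\FI$-algebra structure, which lets one replace the several inclusions of supports by their common restriction through the inclusion of their union; without such structure the naive bound would fail. The combinatorial support count and the compatibility of the Orlik--Solomon generators with the $\FI$-maps are routine, but should be written out carefully.
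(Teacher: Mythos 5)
Your proposal is correct and follows the same overall strategy as the paper: Proposition~\ref{fgnorm} plus Gadish's Theorem~\ref{GadThm} give finite generation, Theorem~\ref{fgconsequence} converts a generation-degree bound into parts (1)--(3), the Orlik--Solomon presentation reduces everything to $H^1$, and the count showing each hyperplane $H_\alpha$ is supported on exactly $|\lambda|+\lambda_1$ elements shows $H^1$ is generated in degree $c=|\lambda|+\lambda_1$. The one genuine difference is the final step. The paper argues that $H^i$ is a quotient of $(H^1)^{\otimes i}$ and invokes additivity of generation degree under tensor products, explicitly declining to claim that $H^i$ itself is generated in $\FI$-degree $\leq i\cdot c$ and instead noting that the bounds on irreducible decompositions and dimension growth descend to quotients. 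You instead prove the generation-degree bound directly: since the transition maps are pullbacks of regular maps and hence ring homomorphisms, you write each factor of an $i$-fold product of Orlik--Solomon generators as a transition image from degree $c$, factor all these injections through the inclusion of the union of their images (of size $\leq ic$), and use multiplicativity to exhibit the product in the image of a single transition map from degree $\leq ic$. This is a slightly stronger conclusion than the paper asserts, and it is valid: the only points you leave implicit are that the non-singleton blocks of $\alpha$ have sizes matching the parts of $\lambda[c]^\dagger$ (so that $\beta\in\mathbb{P}_c(\lambda[c]^\dagger)$ with $f(\beta)=\alpha$ really exists), and that the transition map sends $e_{H_\beta}$ to $e_{H_{f(\beta)}}$, which follows from $(f^\ast)^{-1}(H_\beta)=H_{f(\beta)}$ and naturality of the logarithmic generators; both are routine. (Minor quibble: you call the transition maps ``pushforwards'' at one point after correctly identifying them as pullbacks on cohomology, but this is terminology, not mathematics.)
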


\begin{proof}
In view of Proposition \ref{fgnorm}, as well as Theorems \ref{GadThm} and \ref{fgconsequence}, the only part of this theorem that remains unjustified are the bounds of $i\cdot(|\lambda| + \lambda_1)$ that appear throughout. We will accomplish this using a few key facts from the study of hyperplane arrangements. Firstly, the famous theorem of Orlik and Solomon \cite{OS} tells us that for any fixed $n \geq |\lambda| + \lambda_1$,
\begin{itemize}
    \item The first cohomology group $H^1(\mathcal{M}(\mathcal{A}_{\lambda[n]});\Q)$ is a vector space with basis in bijection with the hyperplanes of $\mathcal{A}_{\lambda[n]}$, and\\
    \item for any $i > 1$, the cohomology group $H^i(\mathcal{M}(\mathcal{A}_{\lambda[n]});\Q)$ is a quotient of the tensor power $(H^1(\mathcal{M}(\mathcal{A}_{\lambda[n]});\Q))^{\otimes i}$
\end{itemize}
The first fact can be applied to show that, as an $\FI$-module, $H^1(\mathcal{M}(\mathcal{A}_{\lambda[\bullet]});\Q)$ is generated in degree $|\lambda| + \lambda_1$. The bounds of the original Theorem \ref{fgconsequence} now imply our bounds for this module. For higher $i$, one simply applies the fact that generating degree is additive in tenor powers of $\FI$-modules \cite{CEF}, and the second fact found above. Note that while this is not enough to prove that the higher cohomologies are generated in degrees at most $i\cdot(|\lambda|+\lambda_1)$, it certainly is strong enough to conclude the upper bounds we are looking for, as upper bounds on the irreducible decomposition and dimension growth will descend to quotients.
\end{proof}

\begin{remark}
One might observe that the above proof can be enhanced so-as to entirely avoid the theory of $\FI$-arrangements. Indeed, it would essentially amount to showing that the quotient maps from the tensor powers of the first cohomology to the $i$-th cohomology respected all of the $\FI$-structure. Such a proof, however, would essentially only give us statements about these cohomology groups at the level of algebra. In contrast, our method can be thought of as proving finite generation at the level of the topological spaces themselves. This has utility in-so-far-as applications to \'etale cohomology, as detailed in \cite{Gad}.
\end{remark}

\begin{example}
In the case where $\lambda = (1)$, our intrinsic arrangement is just the braid arrangement whose complement space is the configuration space of $n$ points in $\C$, modulo simultaneous translation. That these spaces exhibit representation stability in their cohomology is essentially the prototypical and motivating example of representation stability, going back to the original papers \cite{CF,CEF}
\end{example}

\begin{definition}
Following the proof of Theorem \ref{compfg}, we will often use the notation
\[
\OS(\mathcal{A}) := H^\ast(\mathcal{M}(\mathcal{A})),
\]
for the cohomology ring (or \textbf{Orlik-Solomon algebra}) of the complementary variety. In the next section will will see these algebras play a prominent role. Importantly for us, one should recall that $\OS(\mathcal{A})$ is generated in its first degree, and that this first degree piece has a basis in bijection with the hyperplanes of $\mathcal{A}$.
\end{definition}

\section{$\FSop$-modules and Kazhdan-Lusztig polynomials}

\subsection{The technical setup}\label{fsmod}
In the following sections we tackle the problem of stability phenomena, or lack thereof, present in the equivariant Kazhdan-Lusztig polynomials of the intrinsic hyperplane arrangements $(S^{\lambda}_n,\mathcal{A}_{\lambda[n]})$. Perhaps somewhat surprisingly, these stability phenomena will not turn out to be related with $\FI$ or traditional representation stability at all. Instead, we will look at the category $\FSop$; the opposite category of finite sets and surjections. To begin, we recall the primary theorem on finitely generated modules over this category.

\begin{theorem}[\cite{sam,fs-braid,Tost1,Tost2}]\label{fsCon}
Let $V$ be a $\FSop$-module over $\Q$ that is a subquotient of a finitely generated $\FSop$ module that is generated in degree $\leq d$. Then for all $n \gg 0$:
\begin{enumerate}
    \item If $\lambda$ is a partition of $n$ such that $S^\lambda$ is a summand of $V_n$, then the number of rows of $\lambda$ is at most $d$.
    \item For any partition $\lambda$, the multiplicity of $\lambda[n]$ in $V_n$ agrees with a quasi-polynomial in $n$ of degree $\leq d-1$.
    \item There exist polynomials $\{p_i\}_{i \leq d}$ such that $\dim_\Q(V_n) = \sum_i p_i(n)\cdot i^n$.
\end{enumerate}
\end{theorem}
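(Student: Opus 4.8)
Although Theorem~\ref{fsCon} is quoted from \cite{sam,fs-braid,Tost1,Tost2}, here is the shape of a proof I would give. Write $P_d := \Q[\Hom_{\FSop}([d],-)]$ for the $d$-th representable $\FSop$-module, so that $(P_d)_n$ is the free $\Q$-vector space on surjections $[n]\twoheadrightarrow[d]$; it is finitely generated, generated in degree $d$. Every finitely generated $\FSop$-module generated in degrees $\le d$ is a quotient of a finite sum $\bigoplus_j P_{d_j}$ with each $d_j\le d$, and by the (quasi-)Gröbner property of $\FSop$ from \cite{sam} finitely generated $\FSop$-modules are Noetherian, so submodules of such sums are again finitely generated. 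The plan is therefore: (i) verify all three conclusions for the projectives $P_d$; (ii) show that the relevant classes of integer sequences are stable under finite direct sums, quotients, and submodules, which between them produce every subquotient of a finitely generated module generated in degrees $\le d$.

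For (i), a surjection $[n]\twoheadrightarrow[d]$ is an ordered set partition of $[n]$ into $d$ nonempty blocks, so as an $\Sn_n$-representation
\[
(P_d)_n \;\cong\; \bigoplus_{\beta} \Ind_{\Sn_\beta}^{\Sn_n}\mathbf 1 \;=\; \bigoplus_\beta M^\beta ,
\]
the sum over compositions $\beta=(b_1,\dots,b_d)$ of $n$ with all $b_i\ge 1$ and $\Sn_\beta=\Sn_{b_1}\times\cdots\times\Sn_{b_d}$. Young's rule gives $M^\beta=\bigoplus_\mu K_{\mu,\beta}S^\mu$ with $K_{\mu,\beta}\ne 0$ only if $\mu\trianglerighteq\operatorname{sort}(\beta)$ in dominance order, and since $\operatorname{sort}(\beta)$ has exactly $d$ parts, $\mu$ has at most $d$ rows; this is part (1) for $P_d$, and it passes to subquotients since a sub- or quotient $\Sn_n$-representation of one whose constituents all have $\le d$ rows has the same property. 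For parts (2)--(3) I would pass to generating functions: on the one hand $\dim(P_d)_n=\sum_{i=0}^{d}(-1)^{d-i}\binom{d}{i}i^{n}$, so $\sum_n\dim(P_d)_n\,t^n$ is rational with simple poles only at $t\in\{1,\tfrac12,\dots,\tfrac1d\}$; on the other hand, by inclusion--exclusion over which values of $[d]$ occur together with the hook--content formula, the multiplicity of $S^{\lambda[n]}$ in $(P_d)_n$ is, for $n\gg 0$, a polynomial in $n$ of degree $d-1$, whose generating function has a single pole at $t=1$ of order $\le d$.

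For (ii), given a finitely generated $V$ generated in degrees $\le d$, pick a presentation $0\to W\to\bigoplus_j P_{d_j}\to V\to 0$; then $W$ is finitely generated, and the Gröbner theory for $\FSop$ (degenerating $W$ to a monomial submodule generated by surjections of arity $\le d$) bounds the poles of the Hilbert series of $W$ to lie in $\{1,\tfrac12,\dots,\tfrac1d\}$ and the poles of each $\lambda$-isotypic multiplicity series of $W$ to roots of unity with pole order $\le d$ (the possible periodicity here is what forces the word \emph{quasi}-polynomial in part (2)). Subtracting, $V$ inherits the same control; reading it back, $\dim V_n=\sum_{i\le d}p_i(n)\,i^{n}$ with $p_i\in\Q[n]$, and the multiplicity of $\lambda[n]$ in $V_n$ is eventually a quasi-polynomial of degree $\le d-1$. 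Finally, a general subquotient of a finitely generated module generated in degrees $\le d$ is a quotient of a submodule of it: the submodule step is covered by Noetherianity plus the previous paragraph, and the quotient step by the same short-exact-sequence subtraction.

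The main obstacle is precisely the Gröbner/combinatorial-Noetherian input in the previous paragraph: because $\FSop$-modules generally have infinite projective dimension, one cannot resolve $V$ by projectives and truncate, so there is no purely homological way to bound the Hilbert series or the isotypic-multiplicity series of an arbitrary finitely generated submodule $W\subseteq\bigoplus_j P_{d_j}$. The contribution of \cite{sam} (and, for the equivariant refinements in parts (1)--(2), of \cite{Tost1,Tost2}) is exactly to supply this bound, by replacing $W$ with a monomial submodule whose generating functions can be computed combinatorially and seen to have no worse poles than those of the ambient projectives.
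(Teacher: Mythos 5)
The paper does not actually prove Theorem \ref{fsCon}; it is imported as background from \cite{sam,fs-braid,Tost1,Tost2}, so there is no internal argument to compare against. Your outline is essentially the proof that lives in those references: reduce to the principal projectives $P_d$, identify $(P_d)_n$ with the permutation module on ordered set partitions of $[n]$ into $d$ nonempty blocks so that Young's rule gives the $d$-row bound of part (1) (which then passes to arbitrary subquotients for free), compute $\dim (P_d)_n=\sum_{i\le d}(-1)^{d-i}\binom{d}{i}i^n$ and the isotypic multiplicities by inclusion--exclusion, and then use Noetherianity of $\FSop$ together with Gr\"obner-type control of submodules of $\bigoplus_j P_{d_j}$ to transfer parts (2)--(3) to subquotients via the short-exact-sequence subtraction. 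Two caveats, both of which you flag yourself but which are worth naming as the actual mathematical content: first, the subtraction step is only as strong as the bound you can establish for the submodule $W\subseteq\bigoplus_j P_{d_j}$, whose generation degree may well exceed $d$, so the assertion that its Hilbert series and multiplicity series have poles governed by the \emph{ambient} $d$ is precisely the hard input from \cite{sam} and cannot be treated as routine; second, monomial (Gr\"obner) degeneration destroys the $\Sn_n$-action, so it can only yield the dimension statement (3), while the equivariant statement (2) --- eventual quasi-polynomiality of the multiplicity of $\lambda[n]$ --- genuinely requires the character-level machinery of \cite{Tost1,Tost2} (and the $\FSop$ analysis of \cite{fs-braid}), which you correctly cite rather than reprove. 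As a citation-level justification your proposal is sound and accurately locates where the difficulty sits; as a self-contained proof it defers exactly those two points to the literature.
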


\begin{remark}\label{tostCor}
Just as with $\FI$-modules, there is a version of the final statement in the above theorem that encodes all of the character data of $V_n$ into a expression involving character polynomials and, unique to the $\FSop$ case, character exponentials. This statement is quite a bit more complex than the final statement of Theorem \ref{fgconsequence}, and we therefore opt to direct the interested reader to \cite{Tost2}, where it was originally proven.
\end{remark}

In previous works that have considered equivariant Kazhdan-Lusztig polynomials in the context of representation stability, the main tool that one has to prove stability results is a certain spectral sequence discovered by Proudfoot and Young in \cite{fs-braid}. To state this result, however, we need a particular construction coming from the theory of hyperplane arrangements.

\begin{definition}
Let $(V,\mathcal{A})$ be a complex hyperplane arrangement. A \textbf{flat} of the arrangement, $F$, is any linear subspace of $V$ that is realizable as an intersection of the hyperplanes of $\mathcal{A}$. That is to say, a flat of $\mathcal{A}$ is just any member of its intersection semi-lattice $\mathcal{L}(\mathcal{A})$. If $F$ is a flat, then the \textbf{contraction} by $F$ is the arrangement $(F,\mathcal{A}^F)$, where
\[
\mathcal{L}(\mathcal{A}^F) \cong \{x \in \mathcal{L}(\mathcal{A}) \mid x \subseteq F\}.
\]
On the other hand, the \textbf{restriction} to $F$ is the arrangement $(V/F, \mathcal{A}_F)$, where
\[
\mathcal{L}(\mathcal{A}_F) \cong \{x \in \mathcal{L}(\mathcal{A}) \mid x \supseteq F\}.
\]

The \textbf{matroidal corank} of a flat $F$ is equal to its dimension as a vector space over $\C$ $\crk(F) = \dim_{\C}(F)$.
\end{definition}

These definitions now allow us to describe the necessary spectral sequence.

\begin{theorem}[Proudfoot and Young, Theorems 3.1 and 3.3 \cite{fs-braid}]\label{spectral}
For any arrangement $(V,\mathcal{A})$ and positive integer $i$, there is a first quadrant homological spectral sequence $E(\mathcal{A},i)$ converging to $\IH_{2i}(X_{\mathcal{A}})$,
with $$E(\mathcal{A},i)^1_{p,q} = \bigoplus_{\crk G = p}\OS_{2i-p-q}(\mathcal{A}_G) \otimes \IH_{2(i-q)}(X_{\mathcal{A}^G}).$$
If $\varphi:\mathcal{A} \to \mathcal{A}^F$ is a contraction, there is a canonical map $\varphi^*:E(\mathcal{A}^F,i)\to E(\mathcal{A},i)$ of spectral sequences, composing in the expected way, and converging to a map $\IH_{2i}(X_{\mathcal{A}^F})\to \IH_{2i}(X_{\mathcal{A}})$.
The map $E(\mathcal{A}^F,i)^1_{p,q}\to E(\mathcal{A},i)^1_{p,q}$ kills the $G$-summand unless $G$ is contained in $F$.
In this case, the image of $G$
in $\mathcal{A}^F$ is a flat $G'$ of $\mathcal{A}^F$, and $(\mathcal{A}^F)^{G'}$ is canonically isomorphic to $\mathcal{A}^G$.  The map takes
the $G$-summand of $E(\mathcal{A},i)^1_{p,q}$ to the $G'$-summand of $E(\mathcal{A}^F,i)^1_{p,q}$ by 
the canonical map $\OS_{2i-p-q}(\mathcal{A}_G)\to\OS_{2i-p-q}(\mathcal{A}^{F}_G)$ tensored with the identity map on $\IH_{2(i-q)}(X_{\mathcal{A}^F})$.
\end{theorem}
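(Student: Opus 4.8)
The plan is to realize the spectral sequence as the one attached to the natural stratification of the reciprocal plane $X_{\mathcal{A}}$ by flats, to compute its $E^1$-page by a local analysis of the intersection complex $\IC_{X_{\mathcal{A}}}$, and then to read off the functoriality statement from naturality of this construction. The first input I would need is the stratification $X_{\mathcal{A}} = \bigsqcup_{G \in \mathcal{L}(\mathcal{A})} S_G$, with $S_G \subseteq X_{\mathcal{A}} \subseteq \C^{\mathcal{A}}$ the locally closed locus on which exactly the coordinates meeting $G$ are nonzero, together with three of its properties: (i) $S_G$ is isomorphic to the inverted complement of the localized sub-arrangement at $G$, a smooth variety of dimension $\crk G$ whose cohomology is the Orlik--Solomon algebra occurring as the first tensor factor in the statement; (ii) $\overline{S_G} = \bigsqcup_{G' \le G} S_{G'}$ is canonically the reciprocal plane of that localized sub-arrangement (in the sense of Definition~\ref{KLdef}); and (iii) along $S_G$, the variety $X_{\mathcal{A}}$ is \'etale-locally the product of $S_G$ with the cone point of the reciprocal plane of the residual arrangement at $G$. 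Property (iii) is precisely the geometry underlying the recursive definition of Kazhdan--Lusztig polynomials; combined with the parity vanishing of the intersection cohomology of reciprocal planes --- so that passing to stalks loses no truncation --- it gives that $\IC_{X_{\mathcal{A}}}$ restricted to $S_G$ is, up to a shift, the constant complex with cohomology $\IH^{2\bullet}(X_{\mathcal{A}^G})$.

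Granting this, I would filter $X_{\mathcal{A}}$ by the closed sets $X_{\le p} := \bigsqcup_{\crk G \le p} S_G$ and form the homological spectral sequence of this filtered space with coefficients in a suitable shift of $\IC_{X_{\mathcal{A}}}$, taken in Borel--Moore homology since $X_{\mathcal{A}}$ is affine. Its abutment is $\IH_{2i}(X_{\mathcal{A}})$ --- after extracting the even part, which by parity is all of it --- and its $E^1$-term in filtration degree $p$ is $\bigoplus_{\crk G = p} H^{\mathrm{BM}}_{\bullet}(S_G;\, \IC_{X_{\mathcal{A}}}|_{S_G})$. Feeding in property (iii) and applying Poincar\'e--Lefschetz duality on the smooth complex $p$-fold $S_G$, each such term becomes $\OS_{\bullet}(\mathcal{A}_G) \otimes \IH_{\bullet}(X_{\mathcal{A}^G})$ once the regrading from the two shifts is accounted for, which is exactly $E(\mathcal{A},i)^1_{p,q} = \bigoplus_{\crk G = p} \OS_{2i-p-q}(\mathcal{A}_G) \otimes \IH_{2(i-q)}(X_{\mathcal{A}^G})$; first-quadrant-ness is forced by the degree ranges of Orlik--Solomon algebras and of the intersection cohomology of reciprocal planes.

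For the functoriality, the key point is that a contraction realizes $X_{\mathcal{A}^F} = \overline{S_F}$ as a closed union of strata of $X_{\mathcal{A}}$ with $X_{\mathcal{A}^F} \cap X_{\le p} = (X_{\mathcal{A}^F})_{\le p}$. As this inclusion is proper, Borel--Moore pushforward induces a map of filtered complexes, hence a map of spectral sequences $E(\mathcal{A}^F,i) \to E(\mathcal{A},i)$ abutting to $\IH_{2i}(X_{\mathcal{A}^F}) \to \IH_{2i}(X_{\mathcal{A}})$, and compatibility with iterated contractions is then formal. On the $E^1$-page the map splits as a sum over flats $G$: a $G$-summand with $G \not\le F$ is not hit, since $S_G \cap \overline{S_F} = \emptyset$, while for $G \le F$ one uses the canonical identification of $S_G$ with the corresponding stratum of $X_{\mathcal{A}^F}$ and the canonical isomorphism $(\mathcal{A}^F)^{G'} \cong \mathcal{A}^G$ (with $G'$ the image of $G$) to match summands, the map being the canonical comparison on the Orlik--Solomon factor and the identity on the $\IH$ factor.

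The step I expect to be the real obstacle is the local analysis behind property (iii): establishing the transverse product structure with the correct identification of the residual arrangement at each flat, and establishing (or citing) the parity of the intersection cohomology of reciprocal planes so that the stalks of $\IC_{X_{\mathcal{A}}}$ recover the full $\IH$ of the transverse slice rather than a truncation. This is exactly where the combinatorial Kazhdan--Lusztig recursion becomes categorified. Everything after that --- assembling the filtration spectral sequence, keeping track of the two shifts in the regrading, and invoking naturality for the functoriality --- is routine though notationally heavy, the most error-prone remaining point being the precise variance of the maps on the Orlik--Solomon factors in the functoriality statement.
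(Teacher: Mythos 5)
This theorem is not proved in the paper you were asked about: it is imported verbatim from Proudfoot--Young \cite{fs-braid} (their Theorems 3.1 and 3.3), with the local geometry of reciprocal planes supplied by \cite{EPW}, so the only meaningful comparison is with the original argument. The first half of your sketch does follow that argument: stratify $X_{\mathcal{A}}$ by flats, use the \'etale-local product structure along each stratum, and take the spectral sequence of the filtration by corank on the hypercohomology of $\IC_{X_{\mathcal{A}}}$, identifying each $E^1$-summand as (cohomology of the stratum) $\otimes$ (stalk of the intersection complex). Two corrections of bookkeeping there. The reason no truncation is lost in passing to stalks is not parity: the transverse slice is the affine cone $X_{\mathcal{A}^G}$, whose ordinary-support intersection cohomology vanishes from its dimension on, so the cone formula already gives that the stalk along the stratum is the full $\IH^{2\bullet}(X_{\mathcal{A}^G})$; this is exactly the local computation of \cite{EPW}, with parity only serving to confine everything to even degrees. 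Also, the stratum $S_G$ (the inverted complement of the arrangement $\mathcal{A}_G$ of hyperplanes containing $G$) has dimension equal to the \emph{rank} of the flat; $\crk G = p$ is its codimension, which matters when you carry out the duality and regrading step you deferred as routine.

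The genuine gap is the functoriality paragraph. Under the conventions you yourself set up (and those of \cite{EPW}), the closure of the stratum $S_F$ is the reciprocal plane of the restriction $\mathcal{A}_F$, not of the contraction: $X_{\mathcal{A}^F}$ is the transverse slice to $S_F$ and is not a closed union of strata of $X_{\mathcal{A}}$ at all, so the map $\IH_{2i}(X_{\mathcal{A}^F}) \to \IH_{2i}(X_{\mathcal{A}})$ cannot be obtained by pushing forward along a closed embedding. Even if such an embedding existed, intersection homology is not functorial for proper maps or closed inclusions, so ``Borel--Moore pushforward induces a map of filtered complexes'' is not an available move; this is precisely why the existence of $\varphi^*$ is a theorem in \cite{fs-braid} rather than a formality. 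The map has to be produced sheaf-theoretically from the stratified local structure: the product decomposition along $S_F$ identifies $\IH^{*}(X_{\mathcal{A}^F})$ with the stalk cohomology of $\IC_{X_{\mathcal{A}}}$ along that stratum, and the desired arrow is (dual to) the resulting restriction-to-stalk map, which must then be checked to respect the corank filtration and to have the stated summand-by-summand form on $E^1$ --- that verification is the content of Theorem 3.3. (A mitigating note: the statement as reproduced in the paper garbles the direction of the $E^1$-level map and writes the identity on $\IH_{2(i-q)}(X_{\mathcal{A}^F})$ where the slice factor $\IH_{2(i-q)}(X_{\mathcal{A}^G})$ is meant, which may have contributed to the confusion; but your mechanism for producing the map would still need to be replaced as above.)
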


Recall from Section \ref{sec:Comp} that the machinery of Gadish allowed us to conclude more than just formal representation stability for the cohomology groups of the complement of the arrangement. It allowed us to conclude that finite generation was true at the level of the arrangement itself. Combinatorially, this followed from studying the behavior of flats of the arrangement of constant \textbf{ matroidal rank}. That is, constant codimension within $S(\lambda)_n$. The above spectral sequence tells us that one can in principal prove similar statements for the coefficients of the equivariant Kazhdan-Lusztig polynomial, provided that one can show stability at the level of the flats of constant \textbf{matroidal corank}. 

We will discuss later why such stability appeared in the case of the braid arrangement, before arguing why it will likely \emph{not} appear in any longer hook diagram.

\subsection{Stable and unstable flats}

Let's return to our particular context. Fix a positive integer $l$, and write $\lambda$ for the partition $(1,\ldots,1)$ of $l$. Then the padded partition $\lambda[n]$ is a hook of height $l+1$. The intrinsic arrangements corresponding to hooks were considered in the originating work \cite{TVY}, as their combinatorics are some of the few that appear actually tractable. For instance, the $l = 1$ case is precisely the case of the braid arrangement. 

Hyperplanes in the arrangement $\mathcal{A}_n = \mathcal{A}_{\lambda[n]}$ are in bijection with subsets of $[n]$ of size $l+1$. Any surjection $f:[n] \twoheadrightarrow [m]$, is determined by the data of a partition of the set $[n]$, along with a bijection from $[m]$ to the parts of the partition. This data, in turn, determines a flat of $\mathcal{A}_n$ by taking the intersection of all hyperplanes whose corresponding set intersects at least one of the parts of the partition in such a way that the intersection has size at least two.  In other words, this flat is formed by the intersection of all hyperplanes $H_\alpha$ such that $|f(\alpha)| < l+1$. We write $F_f \in \mathcal{L}(\mathcal{A}_n)$ to denote this particular flat. These flats have a nice description as the following shows.

\begin{proposition}\label{specialFlats}
Let $f:[n] \rightarrow [m]$ be a surjection of sets. If we write $\Sn_f$ for the subgroup of $\Sn_n$ generated by permutations that only permute items within the fibres of $f$, then,
\[
F_f = S(\lambda)_n^{\Sn_f}.
\] 
In particular, if $f:[n] \rightarrow [n-1]$ is a surjection that maps $i$ and $j$ to the same element of $[n-1]$ then
\[
F_f = S(\lambda)^{(ij)}_n.
\]
\end{proposition}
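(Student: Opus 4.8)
The plan is to reduce the statement to its ``in particular'' special case and then prove that case by an explicit computation inside the exterior-power model of $S(\lambda)_n$. Recall that the hook $\lambda[n]=(n-l,1^l)$ satisfies $S(\lambda)_n\cong\bigwedge^l V$, where $V=S^{(n-1,1)}=\{x\in\C^n:\sum_i x_i=0\}$ is the standard representation.

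First I would dispose of the reduction. A subset $\alpha$ of size $l+1$ has $|f(\alpha)|<l+1$ exactly when $f|_\alpha$ fails to be injective, i.e.\ when $\alpha$ contains two distinct elements lying in a common fibre of $f$. Hence the set of $\alpha$ cutting out $F_f$ is the union, over pairs $i\neq j$ with $f(i)=f(j)$, of $\{\alpha:|\alpha|=l+1,\ \{i,j\}\subseteq\alpha\}$, so
\[
F_f=\bigcap_{\substack{i\neq j\\ f(i)=f(j)}}\ \bigcap_{\substack{|\alpha|=l+1\\ \{i,j\}\subseteq\alpha}}H_\alpha .
\]
On the other hand, $\Sn_f=\prod_k\Sn_{f^{-1}(k)}$ is generated by the transpositions $(ij)$ with $f(i)=f(j)$, so $S(\lambda)_n^{\Sn_f}=\bigcap_{f(i)=f(j)}S(\lambda)_n^{(ij)}$. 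Therefore the whole Proposition follows from the single claim that, for every pair $i\neq j$ in $[n]$,
\[
\bigcap_{\substack{|\alpha|=l+1\\ \{i,j\}\subseteq\alpha}}H_\alpha=S(\lambda)_n^{(ij)} ,
\]
which is exactly the ``in particular'' clause (apply it to $f:[n]\to[n-1]$ identifying only $i$ and $j$).

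To prove this I fix an $\Sn_n$-invariant Hermitian inner product on $S(\lambda)_n$, so that for any involution $\tau$ the eigenspaces $S(\lambda)_n^{\tau}$ and $S(\lambda)_n^{-\tau}$ are orthogonal complements. For a size-$(l+1)$ subset $\alpha$ with $i\in\alpha$, I would compute the normal line of $H_\alpha$ using the ``star'' generating set $T_\alpha=\{(ia):a\in\alpha\setminus\{i\}\}$ of $\Sn_\alpha$. A short wedge computation shows that the nonzero vector $v_\alpha:=\bigwedge_{a\in\alpha\setminus\{i\}}(e_i-e_a)\in\bigwedge^l V$ is a $(-1)$-eigenvector of every $(ia)$; since $H_\alpha=\sum_a S(\lambda)_n^{(ia)}$ is a hyperplane, independent of the choice of $T_\alpha$ \cite{TVY}, it follows that $H_\alpha^{\perp}=\bigcap_a S(\lambda)_n^{-(ia)}$ is a line and hence $H_\alpha^{\perp}=\langle v_\alpha\rangle$. (When $j\in\alpha$ the star contains $(ij)$, which incidentally re-proves $S(\lambda)_n^{(ij)}\subseteq H_\alpha$.)

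It remains to show that, for fixed $i\neq j$, the vectors $v_\alpha$ with $\{i,j\}\subseteq\alpha$ span $S(\lambda)_n^{-(ij)}=\bigl(S(\lambda)_n^{(ij)}\bigr)^{\perp}$; the displayed identity then follows by taking orthogonal complements. Writing $\alpha=\{i,j\}\sqcup\beta$, substituting $e_i-e_b=\tfrac12(e_i-e_j)+\tfrac12 u_b$ with $u_b:=e_i+e_j-2e_b$, and using that repeated wedge factors vanish, one gets $v_\alpha=2^{-(l-1)}(e_i-e_j)\wedge\bigwedge_{b\in\beta}u_b$. The $n-2$ vectors $\{u_b\}_{b\in[n]\setminus\{i,j\}}$ are linearly independent and lie in the $(+1)$-eigenspace $V_+$ of $(ij)$ on $V$, hence form a basis of $V_+$ (which has dimension $n-2$), while $w:=e_i-e_j$ spans the $1$-dimensional $(-1)$-eigenspace $V_-$. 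Since $V=V_+\oplus V_-$ and $\dim V_-=1$, the $(-1)$-eigenspace of $(ij)$ on $\bigwedge^l V$ equals $w\wedge\bigwedge^{l-1}V_+$, and $\eta\mapsto w\wedge\eta$ carries the monomial basis $\{\bigwedge_{b\in\beta}u_b:|\beta|=l-1\}$ of $\bigwedge^{l-1}V_+$ onto a basis of it; thus the $v_\alpha$ with $\{i,j\}\subseteq\alpha$ form a basis of $S(\lambda)_n^{-(ij)}$, and $\bigcap_{\{i,j\}\subseteq\alpha}H_\alpha=\bigl(\sum_\alpha\langle v_\alpha\rangle\bigr)^{\perp}=S(\lambda)_n^{(ij)}$. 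I expect the main obstacle to be precisely this last step: nailing down the substitution-and-expansion formula for $v_\alpha$ and verifying that the resulting vectors are a genuine basis of the $(-1)$-eigenspace rather than merely a spanning family (the matching count $\binom{n-2}{l-1}$ on both sides makes this plausible but still needs proof). Everything else is formal, granting from \cite{TVY} that each $H_\alpha$ is a well-defined hyperplane.
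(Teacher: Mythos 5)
Your proposal is correct and follows essentially the same route as the paper: identify the normal lines of the hyperplanes $H_\alpha$ with $\{i,j\}\subseteq\alpha$ as sign vectors for $(ij)$, show they span the full $(-1)$-eigenspace of $(ij)$ on $S(\lambda)_n$, and pass to orthogonal complements to get the fixed space, reducing the general $f$ to transpositions within fibres. The only difference is one of completeness in your favor: where the paper asserts the spanning claim and says the general case is ``similar,'' you prove both explicitly, via the $\bigwedge^l S^{(n-1,1)}$ model with the substitution $e_i-e_b=\tfrac12(e_i-e_j)+\tfrac12(e_i+e_j-2e_b)$ and the observation that $S(\lambda)_n^{\Sn_f}=\bigcap_{f(i)=f(j)}S(\lambda)_n^{(ij)}$.
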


\begin{proof}
For clarity of exposition we will only be proving the second statement on surjections $f:[n] \rightarrow [n-1]$ that identify $i$ and $j$. The general case is similar. 

To prove this fact, we switch perspectives from the hyperplanes composing $F_f$ to their normal vectors. Per \cite{TVY}, these normal vectors can be described in terms of the tabloid presentation $\{v_T\}_{T \in \mathcal{T}}$ of $S(\lambda)_n$ (see Remark \ref{Tabloid}). In particular, if $\alpha$ is a subset of $[n]$ of size $l+1$, then the normal vector of $H_{\alpha}$ is (up to a sign) equal to $v_T$ where $T$ assigns the numbers in $\alpha$ to the first column of $\lambda$. It follows from this that in the specific case of $F_f$, these tableaux will precisely be those for which both $i$ and $j$ lie in the first column. If one takes one such vector $v_T$, then by definition $(ij)$ acts by negation. In fact, those vectors $v_T$ containing both $i$ and $j$ in the first column precisely generate the sign representation of the subgroup generated by $(ij)$ within $S(\lambda)_n$. We can now conclude that the complementary space to this sign representation, i.e. $F_f$, must be the trivial representation of the subgroup generated by $(ij)$ within $S(\lambda)_n$.
\end{proof}

\begin{definition}
We will call all flats of the form $F_f$, for some surjection $f$, the \textbf{special flats} of $\mathcal{A}_n$.
\end{definition}

Our next proposition illustrates the importance of special flats.

\begin{proposition}\label{contract}
Let $f:[n] \twoheadrightarrow [m]$ denote a surjection of sets. Then $\mathcal{A}_n^{F_f}$ and $\mathcal{A}_m$ are isomorphic. This isomorphism assigns a hyperplane $H_\alpha$ of $\mathcal{A}_m$ to the hyperplane
\[
H_{g(\alpha)} \cap F_f,
\]
in $\mathcal{A}_n^{F_f}$, where $g:[m] \rightarrow [n]$ is any section of the surjection $f$
\end{proposition}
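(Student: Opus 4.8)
The plan is to work directly with the combinatorial description of both arrangements. By definition, $\mathcal{A}_n^{F_f}$ is the arrangement on the vector space $F_f$ whose flats are those $x \in \mathcal{L}(\mathcal{A}_n)$ with $x \subseteq F_f$, and whose hyperplanes are the maximal proper such flats, namely the intersections $H_\alpha \cap F_f$ for those hyperplanes $H_\alpha$ of $\mathcal{A}_n$ that do not already contain $F_f$. First I would record, using Proposition \ref{specialFlats}, that $F_f = S(\lambda)_n^{\Sn_f}$, and then translate the condition ``$H_\alpha \supseteq F_f$'' into combinatorics: via the tabloid/normal-vector description of \cite{TVY} used in the proof of Proposition \ref{specialFlats}, $H_\alpha$ is cut out by $v_T$ with $\alpha$ filling the first column of $\lambda$, and $H_\alpha \supseteq F_f$ exactly when $v_T$ is orthogonal to every element of $F_f$, i.e. when $v_T$ lies in the span of the sign-type vectors generating $\Sn_f$-isotypic pieces — which happens precisely when $\alpha$ contains two elements in the same fibre of $f$, i.e. when $|f(\alpha)| < l+1$. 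Hence the hyperplanes of $\mathcal{A}_n^{F_f}$ are exactly the $H_\alpha \cap F_f$ with $|f(\alpha)| = l+1$, and such an $\alpha$ is the image under a section $g$ of a unique $(l+1)$-subset of $[m]$.

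Next I would build the claimed bijection. Fix a section $g:[m] \hookrightarrow [n]$ of $f$. For each $(l+1)$-subset $\beta$ of $[m]$, send the hyperplane $H_\beta$ of $\mathcal{A}_m$ to $H_{g(\beta)} \cap F_f$ in $\mathcal{A}_n^{F_f}$; since $f(g(\beta)) = \beta$ has size $l+1$, this is a genuine hyperplane of $\mathcal{A}_n^{F_f}$, and every hyperplane of $\mathcal{A}_n^{F_f}$ arises this way (if $|f(\alpha)| = l+1$ then $H_\alpha \cap F_f = H_{g(f(\alpha))} \cap F_f$, because two subsets of $[n]$ with the same image under $f$ and each meeting every fibre in at most one point — well, at most one point total in each touched fibre — yield the same intersection with $F_f$; this needs a short check that restricting to $F_f = S(\lambda)_n^{\Sn_f}$ only remembers $f(\alpha)$). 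So the assignment is a bijection of hyperplane sets, independent of the choice of section up to this identification. Then I would promote it to an isomorphism of arrangements by checking it respects intersections: a flat of $\mathcal{A}_m$ is an intersection $\bigcap_{\beta \in \mathcal{B}} H_\beta$, and its image should be $\bigcap_{\beta \in \mathcal{B}}(H_{g(\beta)} \cap F_f) = \big(\bigcap_{\beta} H_{g(\beta)}\big) \cap F_f$; one shows this matches the corresponding flat of $\mathcal{A}_n^{F_f}$ by comparing normal spaces — the span of $\{v_T : T \text{ first column} = g(\beta),\ \beta \in \mathcal{B}\}$ inside $F_f^\vee$ — against the normal space of the $\mathcal{A}_m$-flat under the linear identification $F_f \cong S(\lambda)_m$ coming from $g$. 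Equivalently, and perhaps more cleanly, I would exhibit the linear isomorphism $S(\lambda)_m \to F_f$ induced by the $\FI$-style map from the section $g$ (combined with the self-duality of Theorem \ref{selfdual}) and verify it pulls back $\mathcal{A}_n^{F_f}$ to $\mathcal{A}_m$ hyperplane-by-hyperplane; the lattice statement then follows formally.

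The main obstacle I expect is the bookkeeping in the previous paragraph: proving that $H_\alpha \cap F_f$ depends only on $f(\alpha)$ (so that the section $g$ truly doesn't matter) and that distinct $(l+1)$-subsets of $[m]$ give distinct hyperplanes of $\mathcal{A}_n^{F_f}$. Both reduce to understanding the images of the normal vectors $v_T$ under the orthogonal projection $S(\lambda)_n \twoheadrightarrow F_f$, and the cleanest route is probably to identify $F_f$ with $S(\lambda)_m$ explicitly via $g$ as above and observe that under this identification the projection of the normal vector of $H_{g(\beta)}$ is (a scalar multiple of) the normal vector of $H_\beta$ in $\mathcal{A}_m$ — this is essentially the compatibility of the tabloid bases of $S(\lambda)_n$ and $S(\lambda)_m$ with respect to restriction along fibres of $f$, which is exactly the kind of statement already implicit in the $\FI$-arrangement structure established in Section \ref{sec:Comp}. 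Once that compatibility is in hand, everything else is formal.
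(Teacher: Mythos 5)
Your overall outline runs parallel to the paper's proof (reduce everything to an identification $F_f \cong S(\lambda)_m$ compatible with the tabloid normal vectors, then use Proposition \ref{specialFlats} for the bookkeeping), but the decisive step is mis-specified and, as written, fails. You propose to realize the identification as ``the linear isomorphism $S(\lambda)_m \to F_f$ induced by the $\FI$-style map from the section $g$.'' The transition map $g_\ast$ of a single section does not have image inside $F_f$. Already for $\lambda = (1)$, $m = n-1$, with $f$ identifying $i$ and $j$ and $g$ the section choosing $i$: the image of $g_\ast$ contains vectors such as $e_i - e_k$, which are not fixed by $(ij)$ and hence do not lie in $F_f = S(\lambda)_n^{(ij)}$. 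The paper's proof is built precisely around repairing this: it uses the averaged map $\phi_f = \tfrac{1}{n_f}\sum_g g_\ast$ over \emph{all} sections $g$ of $f$ (equivalently, a single $g_\ast$ followed by the projection onto the $\Sn_f$-invariants), proves $\phi_f$ is injective because the map $f_\ast$ induced by $f$ itself on $\bigwedge^l S^{(n-1,1)}$ splits it, proves its image lies in $F_f$ because $(ij)\circ g$ is again a section whenever $i,j$ share a fibre, and proves $f_\ast|_{F_f}$ is surjective because the normal vectors cutting out $F_f$ lie in $\ker f_\ast$. Without this averaged/projected map (or an equivalent construction) your concluding claim that ``everything else is formal'' does not follow: a bijection between the two sets of hyperplanes by itself is not an isomorphism of arrangements in the linear category the paper works in, and the compatibility of normal vectors you invoke is exactly the statement that still needs the correct linear map.

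The rest of your plan is sound in outline, though two of the claims you flag carry genuine content and are settled in the paper with the same two ingredients you are missing. Independence of $H_{g(\alpha)} \cap F_f$ from the choice of section (equivalently, that the intersection depends only on $f(\alpha)$) follows because any two sections differ by some $\sigma \in \Sn_f$, so $\sigma \cdot H_{g(\alpha)} = H_{g'(\alpha)}$ while $\sigma$ fixes $F_f$ pointwise by Proposition \ref{specialFlats}; and the unproved half of your characterization (that $|f(\alpha)| = l+1$ forces $H_\alpha \not\supseteq F_f$) is most easily obtained from the splitting map as well, since $f_\ast(v_T) \neq 0$ for such $\alpha$ while the normals spanning the orthogonal complement of $F_f$ are killed by $f_\ast$. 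So your approach is salvageable, but only after replacing the single-section identification with the averaged map $\phi_f$ (or the inverse of $f_\ast|_{F_f}$), at which point it essentially becomes the paper's argument.
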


\begin{proof}

Write $\alpha_1,\ldots,\alpha_{m}$ for the (ordered) parts of the partition of $f$. Then $F_f$ is the intersection of all hyperplanes $H_{\gamma} \in \mathcal{A}_n$, such that $\gamma$ intersects at least one of the $\alpha_i$ in a set of size greater than 1.

Define a map $\phi_f:S^{\lambda}_m \rightarrow S^{\lambda}_n$ of vector spaces by setting,
\begin{align}
\phi_f = \frac{1}{n_f}\sum_{g} g_\ast, \label{section}
\end{align}
where the sum ranges over all injections of sets $g:[m] \hookrightarrow [n]$ that are sections of $f$, $g_\ast$ is the induced map of $g$ on the aforementioned $\FI$-module $S^{\lambda}_\bullet$, and $n_f$ is the number of sections of $f$. We claim that the map $\phi_f$ is injective. Indeed, viewing $S^{\lambda}_n$ as the wedge product $\bigwedge^l S^{(n-1,1)}$, $f$ itself induces an obvious map $f_\ast:S^{\lambda}_n \rightarrow S^{\lambda}_m$. That is, given our basis of wedges of differences of the canonical basis vectors of $\C^n$, $f$ will induce a map given by applying $f$ to the indices of these vectors to each term in the wedge product. It is not hard to check that this map is a splitting of $\phi_f$ - i.e. that $\phi_f$ is a section of it - and therefore $\phi_f$ must be injective.

To check that the image of $\phi_f$ is $F_f$, first observe that it must at least be contained in $F_f$. Indeed, if $H_{\alpha}$ is one of the hyperplanes intersected to obtain $F_f$, then by definition some part of the partition determined by $f$ must intersect $\alpha$ with size at least 2. Let's say $i,j$ are two indices that are contained in $\alpha$, while also being in a single part of $f$. Then we will show that the image of $\phi_f$ is contained in $(S^{\lambda}_n)^{(ij)}$. This shows one containment by Proposition \ref{specialFlats}. We see,
\[
(ij)\cdot \phi_f = \frac{1}{n_f}\sum_{g}(ij)\cdot g_\ast = \frac{1}{n_f}\sum_{g} ((ij) \circ g)_\ast = \frac{1}{n_f}\sum_{g} g_\ast = \phi_f.
\].

Note that in the second equality we have used the fact that the $\FI$-structure is functorial, whereas the third equality comes from the fact that $(ij)$ permutes two entries within a single fiber of the surjection $f$, and therefore $(ij) \circ g$ is once again a section of $f$.

Conversely, we must show that the map induced by $f$, when restricted to $F_f$, surjects onto $S(\lambda)_m$. Once again leaning on our description of the normal vectors, observe that for any tabloid corresponding to one such vector, there is a pair of elements $i,j$ in the same fibre of $f$ that both appear in its first column. In particular, the transposition $(ij)$ swapping these two elements negates the vector. However by definition $f \circ (ij) = f$ as functions of sets. By functoriality it follows that any vector negated by $(ij)$ must be the kernel of $f_\ast.$

Finally, let $H_\alpha$ be a hyperplane of $\mathcal{A}_m$ and let $g,g'$ be two sections of $f$. Then,
\[
H_{g(\alpha)} \cap F_f = H_{g'(\alpha)} \cap F_f.
\]
If $\sigma$ is the permutation for which $\sigma \circ g = g'$, then $\sigma \cdot H_{g(\alpha)} = H_{g'(\alpha)}$. By Proposition \ref{specialFlats} we know that the elements of $F_f$ are fixed by $\sigma$, completing the proof.

\end{proof}

\begin{definition}
    For each $n$, a flat of $\mathcal{A}_n$ is called \textbf{stable} if it is contained in some special flat. Otherwise, we refer to the flat as being \textbf{unstable}.
\end{definition}

\begin{example}
    Consider, once again, the case of the braid arrangement $\lambda = (1)$. In this setting, the lattice of flats is isomorphic to the partition lattice.

    Given a surjection $f:[n] \rightarrow [n-1]$, the special flat $F_f$ is precisely the hyperplane of the arrangement associated to the pair of elements that $f$ identifies. It follows from this that every flat of the arrangement is a stable flat so long as $n \geq 2$. This observation along with the spectral sequence of Theorem \ref{spectral} are the foundation of the main theorem of \cite{fs-braid}, that the equivariant Kazhdan-Lusztig coefficients form finitelty generated $\FSop$-modules.

    Observe, however, even for $\lambda = (1,1)$, it is no longer the case that the special flat $F_f$ corresponds to a hyperplane. We will more deeply consider the case of $\lambda = (1,1)$ in the next sections.
\end{example}

To conclude this section we provide the proof of Theorem \ref{bestwecando}.

\begin{proof}[Proof of Theorem \ref{bestwecando}]

To begin, notice that for $\lambda = (1,1,\ldots,1)$, the representation $S(\lambda)_{l+1}$ is the sign representation of $\Sn_{l+1}$. In particular, $S(\lambda)_{l+1}$ is 1-dimensional.

  If $f:[n] \rightarrow [l+1]$ is any surjection, then by Theorem \ref{contract} we find that the special flat $F_f$ must be 1-dimensional. Observe that if $f$ and $g$ are two surjections from $[n]$ to $[l+1]$ with different fibres, then $F_f \neq F_g$. Indeed, this follows from the classification in Proposition \ref{specialFlats}. We in particular conclude from this that the total number matroidal corank 1 flats in $\mathcal{A}_n$ is at least the Stirling number $S(n,l+1)$.

To conclude the proof, we note \cite{EPW} proves that for any arrangement $\mathcal{A}$, the first Kazhdan-Lusztig coefficient is equal to the number of matroidal corank 1 flats minus the number of matroidal rank 1 flats. In our case we know that the rank 1 flats are in bijection with subsets of $[n]$ of size $l$. In particular, this quantity is only polynomial in $n$. We therefore conclude that that first Kazhdan-Lusztig coefficient of $\mathcal{A}_n$ is bounded from below by a function that is asymptotically equal to $(l+1)^n$, as desired.
\end{proof}

\begin{remark} \label{expansion}
    In the above proof it was remarked that the linear coefficient of the Kazhdan-Lusztig polynomial is equal to the number of corank 1 flats minus the number of rank 1 flats. In fact, something stronger is true. There is a linear map from the formal vector space whose basis is in bijection with rank 1 flats to the formal vector space whose basis is in bijection with corank 1 flats that sends a rank 1 flat to the sum of all corank 1 flats that it \emph{does not} contain. The intersection homology group $IH_{2}(X_\mathcal{A})$, which can be thought of as the categorified version of this coefficient, is isomorphic to the cokernel of this map. This can be seen, with some work, from the spectral sequence of \ref{spectral}.

    By consequence, there is actually an equivariant version of Theorem \ref{bestwecando}, where one considers the $\FS^{op}$-module generated by the special flats coming from surjections $f:[n] \rightarrow [l+1]$. Theorem \ref{fsCon}, and more specifically the character behavior hinted at in Remark \ref{tostCor}, also implies exponential lower bounds on all character values.
\end{remark}

\subsection{The case of $\lambda = (1,1)$: statistics on stable and unstable flats}\label{notStable}
In this concluding section we present experimental evidence in the case of $\lambda = (1,1)$. We note that even though this is the smallest case beyond the well-understood braid matroid of $\lambda = (1)$, its combintorics already appears to be quite complicated. In what follows, we will consider the lines of the hyperplane arrangement $\mathcal{A}_n$. The code used in all that follows can be seen at \cite{FRYcode}.

To start, we found that the first interesting cases of $n = 4,5$ and $6$ could be computed in their entirety. Table \ref{realcount} displays what was discovered in these cases. Note that in this table, the \textbf{size} of a line refers to the maximum number of hyperplanes that one can intersect to obtain it. In other words, it is the set theoretic size of the flat in the associated matroid. In the ``Line Sizes" column of the table, a blue number indicates only unstable flats exist in that size, while a red number indicates only stable flats exist in that size
\begin{table}
\begin{center}
\begin{tabular}{c c c c c} 
 \hline
 $n$ & \# Lines & Line Sizes &  \% of Lines that are Unstable\\ [0.5ex] 
 \hline\hline
 4 & 6 & \textcolor{red}{2}  & 0\% \\ 
 \hline
 5 & 37 & \textcolor{blue}{5},\textcolor{blue}{6},\textcolor{red}{7} & 32.4\% \\
 \hline
 6 & 570 & \textcolor{blue}{9},\textcolor{blue}{10},\textcolor{red}{12},\textcolor{red}{14},\textcolor{red}{16} & 52.6\% \\ [1ex] 
 \hline
\end{tabular}
\caption{Exact counts for the number of lines in the arrangement $\mathcal{A}_n$ for $n = 4,5,6$.}\label{realcount}
\end{center}
\end{table}

There are a few features that one can immediately glean from Table \ref{realcount}. Firstly, in all cases beyond the $n = 4$ case there are unstable lines. In fact, all lines below a certain size are unstable. We will see in the cases of $n > 6$ that different sizes of lines can allow for both stable and unstable flats, however it is always the case that lines of smaller size tend to be unstable. Moreover, one observes that the total percentage of unstable lines is growing considerably with $n$.

We next outline the data we have accumulated for higher $n$. As stated above, for any $n$ at or above 7 it seems to become entirely infeasible to explicitly compute all lines. Instead, in these cases we switch to a more statistical approach.

To accumulate the data presented in the following table, we proceeded as follows for each $n \geq 7$:

\begin{itemize}
    \item Among the $\binom{n}{3}$ hyperplanes in $\mathcal{A}_n$, uniformly at random select a collection of $\binom{n-1}{2}-1$ to intersect. This is the minimum number of hyperplanes that one needs to intersect to create a line
    \item If this intersection happens to be a line, and that line has not yet been accounted for, add the collection of hyperplanes that created it to our current samples. If not, we ignore this particular collection of hyperplanes. Repeat this 300,000 times. 
    \item For each of the lines obtained in the second step, continue intersecting hyperplanes, making sure not to lose dimension, until a maximal intersection has been found describing the line. This maximal intersection description now allows one to easily remove any redundant lines.
    \item Check whether each of the remaining lines is stable by determining whether it is contained within the fixed locus of some transposition $(ij)$. To accomplish this, we consider the maximal intersections computed in the third step, and note the sets associated to the intersecting hyperplanes. A flat is stable if and only if there exists a pair $\{i,j\}$ for which all sets of the form $\{i,j,k\}$ appear.
\end{itemize}

The above procedure was accomplished in the open source software SAGE \cite{FRYcode}. Importantly, to make the linear algebra more feasible we wrote our code using the matroidal description of the intrinsic arrangements found in \cite{WWZ}.

Ultimately, our simulations produced the data found in the following table. In this case we continue to use the color coding of the previous table, while adding a violet color to indicate that both stable and unstable flats were found in that size.

\begin{table}
\begin{center}
\begin{tabular}{c c c c} 
 \hline
  $n$ & \# Lines found & Line sizes found &  $\frac{\text{\# Unstable Lines Found }}{\text{\# Lines found}}$\\ [0.5ex] 
 \hline\hline
 7 & 4040 & \textcolor{blue}{14-17},\textcolor{violet}{18},\textcolor{red}{19},\textcolor{violet}{20},\textcolor{violet}{21},\textcolor{red}{22-24},\textcolor{red}{26},\textcolor{red}{27},\textcolor{red}{30} & 23.6\% \\ 
 \hline
 8 & 5565 & \textcolor{blue}{23-26},\textcolor{violet}{27-32},\textcolor{red}{33},\textcolor{violet}{34},\textcolor{red}{35},\textcolor{red}{36},\textcolor{red}{38},\textcolor{red}{40},\textcolor{red}{42},\textcolor{red}{44},\textcolor{red}{46},\textcolor{red}{50} & 6.1\% \\
 \hline
 9 & 3004 & \textcolor{blue}{33},\textcolor{blue}{35},\textcolor{blue}{37-39},\textcolor{violet}{40-54},\textcolor{red}{55-64},\textcolor{red}{67-69},\textcolor{red}{72},\textcolor{red}{77} & 4.7\% \\
    \hline
 10 & 1195 & \textcolor{blue}{52},\textcolor{blue}{54},\textcolor{blue}{55},\textcolor{blue}{57},\textcolor{blue}{58},\textcolor{blue}{60-62},\textcolor{red}{63},\textcolor{violet}{64-67},\textcolor{blue}{68},\textcolor{violet}{69-76},\textcolor{red}{77-86}, & 3.8\%\\ & &\textcolor{red}{88},\textcolor{red}{90},\textcolor{red}{92},\textcolor{red}{94},\textcolor{red}{96},\textcolor{red}{100},\textcolor{red}{102},\textcolor{red}{106},\textcolor{red}{112}\\
    \hline
 11 & 394 & \textcolor{blue}{75},\textcolor{blue}{78},\textcolor{blue}{79},\textcolor{blue}{87},\textcolor{blue}{89},\textcolor{blue}{90},\textcolor{red}{92-94},\textcolor{violet}{95},\textcolor{blue}{97},\textcolor{violet}{98},\textcolor{violet}{99},\textcolor{blue}{100},& 3\% \\ & & \textcolor{violet}{101-103},\textcolor{red}{104-130},\textcolor{red}{132-135},\textcolor{red}{137},\textcolor{red}{140-142},& \text{}\\ & & \textcolor{red}{144},\textcolor{red}{149},\textcolor{red}{156}\\ [1ex] 
 \hline
\end{tabular}
\caption{Compilation of the various statistics found using our sampling procedure.}\label{stabtab}
\end{center}
\end{table}

This table reveals a number of interesting things about the lines in the arrangement $\mathcal{A}_n$. While the proportion of unstable flats grows from $n = 4$ to $n = 6$, our method finds proportionally fewer and fewer unstable lines for larger $n$. It is entirely possible this is an artifact of how our code samples lines. Recall that we start by uniformly at random choosing $\binom{n-1}{2} - 1$ hyperplanes to intersect. A line of very large size will therefore tend to be found through this method more often than a line of very small size. As we have seen, the larger flats are generally stable while the smaller flats are generally unstable. This bias might be affecting our data. On the other hand, without having the exact numbers on how many lines exist at each size, it is hard to say how impactful this bias really is. This reasoning also might suggest the reason why our code failed to find lines of minimum size, i.e. $\binom{n-1}{2}-1$, after $n = 7$. In the next section we apply statistical means to approximate the number of lines in the $n = 7$ case in each size. These approximations agree with the idea that there are far more lines in small sizes that our sampling procedure is biased against finding.

\subsection{The case of $\lambda = (1,1)$: approximate counting}

We would like to know - even approximately - how many lines there are in the matroid.  That is, to what extent have the flats that we've found above exhaust the ones which are available?

The question is compounded by the fact that the flat-finding algorithm above does not produce flats with equal probability.  However, when we condition on the size of the flats, the probabilities *are* equal.  That is, the flat-finding algorithm has the following property.  Let $E$ be the event that the algorithm successfully finds a flat of size $k$.  Then, conditioned on $E$, the algorithm is sampling from the uniform distribution on $m$-flats.

This observation allows us to perform an "approximate counting" task, as follows:  We first obtain a collection of flats, by running our algorithm with $n=7$ 5,000,000 times.  We sort the flats by size, yielding random sets $\mathcal{F}_m$ for various flat sizes $m$.  We then do this same task again, yielding a second family of random sets $\mathcal{F}_m'$.  For each $m$, we approximate the number of lines in the matroid having that flat size as $$\#\{m-\text{flats}\} \approx \frac{|\mathcal{F}_m|\cdot|\mathcal{F}'_m|}{|\mathcal{F}_m \cap \mathcal{F}_m'|}.$$ We present the results of one run of this computation below -  If any of $|\mathcal{F}_m|, |\mathcal{F}_m'|, |\mathcal{F}_m \cap \mathcal{F}_m'|$ are zero, then it doesn't make any sense to do the computation - the best we can do is to report the size of whichever of these sets aren't zero, if any. While there is also considerable variation in the count sizes, due to the random sampling, we believe we can be confident in the rough order of magnitude.

\begin{table}
\begin{center}
\begin{tabular}{c c c c c} 
 \hline
  Line Size & First Round Count & Second Round Count & Overlap Count & Approx. count\\ [0.5ex] 
 \hline\hline
 14 & 6 & 11 & 0 & N/A \\ 
 \hline
 15 & 162 & 173 & 1 & 28,026 \\
 \hline
 16 & 406 & 425 & 9  & 19,172 \\
 \hline
 17 & 1,566 & 1,514 & 187 & 12,679\\
 \hline
 18 & 3,195 & 3,307 & 1,245 & 8,487\\ 
 \hline
 19 & 2,028 & 2,058 & 1,651 & 2,528\\
 \hline
 20 & 2,484 & 2,481 & 2,448 & 2,517\\
 \hline
 21 & 1,620 & 1,620 & 1,620 & 1,620\\
 \hline
 22 & 630 & 630 & 630  & 630\\
 \hline
 23 & 735 & 735 & 735 & 735\\
 \hline
 24 & 420 & 420 & 420 & 420\\
 \hline
 26 & 70 & 70 & 70 & 70\\
 \hline
 27 & 105 & 105 & 105 & 105\\
 \hline
 30 & 21 & 21 & 21 & 21
\end{tabular}
\caption{Approximations of the total number of lines in each size for $n = 7$ obtained using our sampling procedure.}\label{axct}
\end{center}
\end{table}

Looking at Table \ref{axct}, one immediately sees that we very likely have an exact count for the number of lines of sizes 21 and higher. This is the result of how much larger lines are favored by our sampling procedure. The data in the above table also suggests that these larger lines are far fewer in number than the smaller lines. The previous sections have already implied that smaller lines are far more likely to be unstable, leading us to conjecture that there will always be far more unstable lines than stable ones. In fact, if we look back to the $n = 7$ row of Table \ref{stabtab}, and make the simplifying, albeit slightly erroneous, assumption that all lines of sizes 19 and higher are stable, while those of size 18 and lower are unstable, the above counts suggest that at least 90\% of all lines are unstable. Being that we lack even an approximate count for the lines of size 14, this number is bound to be even closer to 100\%. Once again, this points to a kind of representation stability result similar to \cite{fs-braid} being extremely unlikely outside of the braid arrangement. There is perhaps still hope, however, that stability is achieved via the action of a category different than $\FSop$

\bibliography{./symplectic}
\bibliographystyle{amsalpha}

\end{document}